\newcommand\asp{\hspace{4mm}}
\newcommand{\sE}{{\mathcal{E}}}
\def\thm@space@setup{%
  \thm@preskip=0.45cm
  \thm@postskip=\thm@preskip 
}
\numberwithin{equation}{section}
\theoremstyle{plain}
\newtheorem{thm}{Theorem}[section]
\newtheorem{definition}[thm]{Definition}
\newtheorem{corollary}[thm]{Corollary}
\newtheorem{example}[thm]{Example}
\newtheorem{lemma}[thm]{Lemma}
\theoremstyle{remark}
\def\card{\mathrm{card}}
\def\<{{\langle}}
\def\>{{\rangle}}
\setlist{  
	listparindent=\parindent,
	parsep=0pt,
}
\begin{document}

\begin{frontmatter}
\title{Convergence of Dirichlet Forms for MCMC Optimal Scaling with Dependent Target Distributions on Large Graphs}
\runtitle{Convergence of Dirichlet Forms for MCMC Optimal Scaling}

\begin{aug}

%

\author{\fnms{Ning} \snm{Ning}\thanksref{e1}\ead[label=e1,mark]{patning@tamu.edu}}

\address{Department of Statistics,
Texas A\&M University, College Station, Texas. \\
\printead{e1}}

\runauthor{Ning Ning}


\end{aug}
\begin{abstract}
Markov chain Monte Carlo (MCMC) algorithms have played a significant role in statistics, physics, machine learning and others, and they are the only known general and efficient approach for some high-dimensional problems. The random walk Metropolis (RWM) algorithm as the most classical MCMC algorithm, has had a great influence on the development and practice of science and engineering. The behavior of the RWM algorithm in high-dimensional problems is typically investigated through a weak convergence result of diffusion processes. In this paper, we utilize the Mosco convergence of Dirichlet forms in analyzing the RWM algorithm on large graphs, whose target distribution is the Gibbs measure that includes any probability measure satisfying a Markov property. The abstract and powerful theory of Dirichlet forms allows us to work directly and naturally on the infinite-dimensional space, and our notion of Mosco convergence allows Dirichlet forms associated with the RWM  chains to lie on changing Hilbert spaces. Through the optimal scaling problem, we demonstrate the impressive strengths of the Dirichlet form approach over the standard diffusion approach.
\end{abstract}

\begin{keyword}
\kwd{Random walk Metropolis algorithm} 
\kwd{Dirichlet form}
\kwd{Langevin diffusion}
\kwd{Optimal scaling}
\kwd{Mosco convergence}
\end{keyword}	

%

\end{frontmatter}

\tableofcontents

\section{Introduction}
\label{sec:Introduction}

We firstly give the background and motivation in Section \ref{sec:Background}, and then summarize our contributions in Section \ref{sec:Our_contributions}, followed by the organization of the paper in Section \ref{sec:Organization}.

\subsection{Background and motivation}
\label{sec:Background}

Markov chain Monte Carlo (MCMC) algorithms are generally used for sampling from multi-dimensional distributions, especially when the number of dimensions is high.
They have played
a significant role in statistics, physics, machine learning and others, and are the only known general and efficient approach for some high-dimensional problems \citep{andrieu2003introduction}. In practice, exact inference is frequently intractable, so approximate inference is often performed using MCMC.
The random walk Metropolis (RWM) algorithm, as one of the
simplest and most popular MCMC algorithms, works by generating a sequence of sample values from a probability distribution from which direct sampling is difficult, such that the distribution of values more closely approximates the desired distribution as more sample values are produced. Specifically, these sample values are produced iteratively, in such a way that the algorithm picks a candidate for the next sample value based on the current sample value at each iteration. With some probability, the candidate is either accepted to be the value in the next iteration, or rejected and then the current value is reused in the next iteration. It works provided that we know a function proportional to the desired density and the values of that function can be calculated, which in turn makes the MCMC algorithm particularly useful since calculating the necessary normalization factor is often extremely difficult in practice (see, e.g. \cite{medina2016stability, atchade2007geometric}).

Optimal scaling analysis for MCMC algorithms is one of the most successful and practically useful way of performing asymptotic analysis in high-dimensions \citep{gelman1997weak,roberts1998optimal,roberts2001optimal}, considering the case that the dimension of the target distribution ($n$) goes to infinity. It investigates $\sigma_n$ which is the variance of the random walk step, aiming to answer two questions: How should $\sigma_n$ scale as a function of $n$, so as to optimize the speed of convergence of the algorithm in
the limit? Is it possible to characterize the optimality of $\sigma_n$ in a way that can be practically utilized? 
\cite{gelman1997weak} showed that when the proposal variance is appropriately scaled according
to $n$, the sequence of stochastic processes formed by the first component
of each Markov chain converges to the appropriate limiting Langevin
diffusion process as $n$ goes to infinity.
This limiting diffusion approximation admits a straightforward efficiency
maximization problem, leading to an asymptotically optimal acceptance rate $0.234$.

As the first optimal scaling paper of continuous state spaces, \cite{gelman1997weak} considered the case that the target distribution $(\pi_n)$ is in an independent and identically distributed (i.i.d.) product form, i.e., $\pi_n(x^n)=\prod_{i=1}^n f(x_i)$. \cite{gelman1997weak} assumed that $f'/f$ is Lipschitz continuous, the existence of finite fourth moment of $f''/f$, and the existence of finite eighth moment of $f'/f$. 
	Later, \cite{durmus2017optimal} extended the optimal scaling analysis to include densities that are differentiable in the $L^p$ mean but may exhibit irregularities at certain points, improving the above eighth moment constraint to the sixth moment, along with other enhancements.
 \cite{zanella2017dirichlet} introduced Mosco convergence \citep{mosco1992composite} of Dirichlet forms to the optimal scaling problem for the RWM algorithm. The Dirichlet form approach allows one to replace the restrictive conditions imposed in \cite{gelman1997weak} with substantially weaker ones. Specifically,  with product-formed target distributions \cite{zanella2017dirichlet} merely assumed that $f'/f$ satisfies a combined growth/local H{\"o}lder condition and the existence of finite second moment of $f'/f$.

 \cite{zanella2017dirichlet} embedded the finite $n$-dimensional Markov chain into the infinite-dimensional space $\mathbb{R}^{\infty}$, updating the first $n$ components while drawing the remaining components independently from the target distribution and then holding them fixed. 
  They proved Mosco convergence of Dirichlet forms associated with the resulting infinite-dimensional Markov chains to the limiting Dirichlet form, as $n$ goes to infinity. 
  The following concerns arose.  Firstly, with i.i.d. product-formed target density, extending the finite-dimensional chain to an infinite dimension is not necessary. 
 In fact, it can be tackled by sampling a single one-dimensional target thanks to the product structure (page $6095$, \cite{yang2020optimal}). Secondly, probability distributions may not be finite in infinite-dimension, such as the Gibbs measure (see detailed explanations in Section \ref{sec:Gibbs_measures}). Thirdly, the i.i.d. form is too limited (page $6095$, \cite{yang2020optimal}).
After \cite{gelman1997weak} optimal scaling analysis focuses on non-(i.i.d.) product-formed target density (see, e.g. \cite{Breyer2000From, Neal2006Optimal,bedard2007weak, bedard2008optimal, beskos2009optimal, mattingly2012diffusion, Hairer2014Spectral, yang2020optimal}). Hence, whether Mosco convergence of Dirichlet forms is useful in modern optimal scaling analysis is an open question.  

The aim of this paper is to address this question. To illustrate the strength of Dirichlet forms and demonstrate broad applicability, it is essential to identify a seminal work in the existing literature that focuses on dependent distributions. One such work is \cite{Breyer2000From}, which is notable for being the first to consider a general distribution on a graph structure, specifically the Gibbs measure on $\mathbb{Z}^d$. According to the Hammersley–Clifford theorem, any probability measure that satisfies a Markov property can be represented as a Gibbs measure with an appropriate choice of energy function. Consequently, the framework introduced in \cite{Breyer2000From} is highly versatile and could be applied to various problems, including Hopfield networks, Markov networks, Markov logic networks, and boundedly rational potential games in game theory and economics, among others. Subsequently, \cite{yang2020optimal} explored dependent models on graphs; however, their mathematical formulations lack the generality of the Gibbs measure. Moreover, the regularity conditions imposed by \cite{yang2020optimal} are comparable to those established by \cite{Breyer2000From}.

\subsection{Summary of our contributions}
\label{sec:Our_contributions}

In this paper, we present proof strategies that leverage Mosco convergence of Dirichlet forms on evolving Hilbert spaces for optimal scaling analysis, involving Gibbs measures on large graphs. The specific contributions of the paper can be categorized into four main areas:

\begin{figure*}[t!]
	\centering
	{\includegraphics[width = 1.7in]{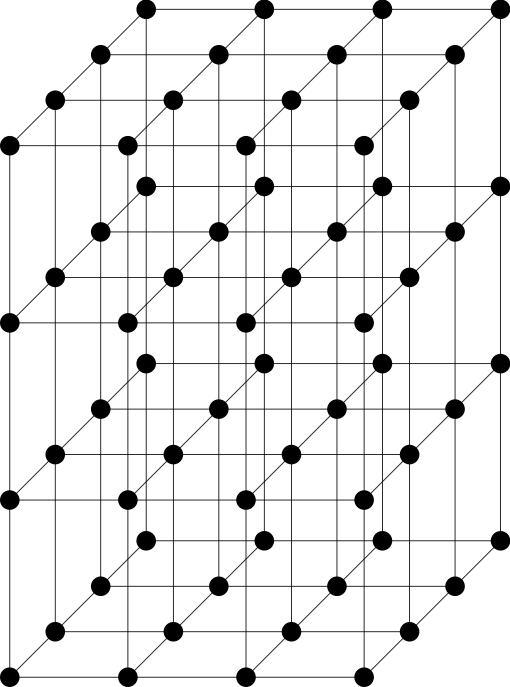}}\hfil
	{\includegraphics[width =2.2in]{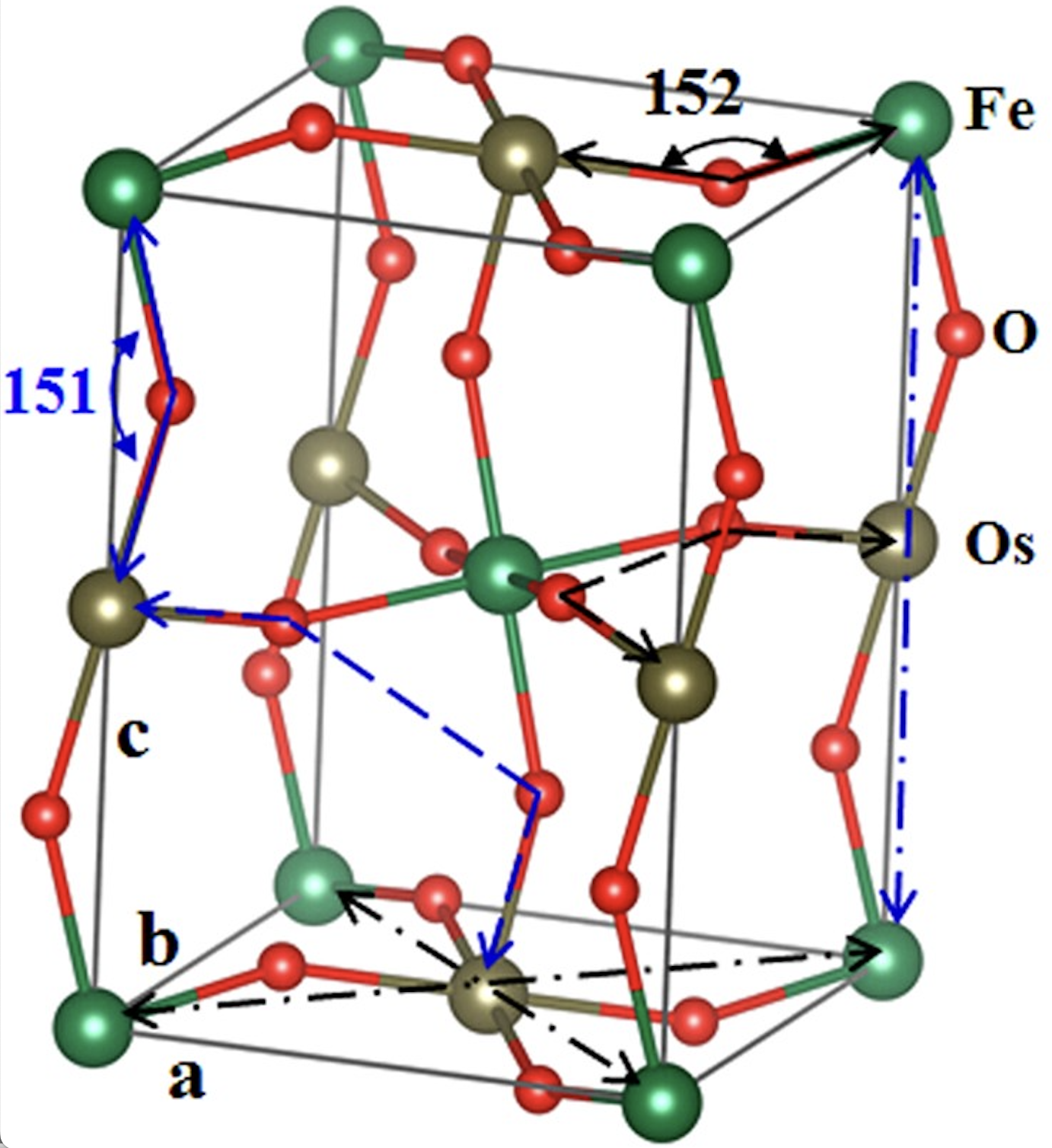}}
	\caption{Illustration of generalized additive property. The left figure depicts the typical lattice structure, while the right figure from \cite{hou2015lattice} showcases a distorted lattice structure in a real scientific application.}
	\label{fig:additive_illustration}
\end{figure*}
\begin{enumerate}
		\setlength\itemsep{0.3em}
\item \textbf{Dependent distribution on a large graph.} 
To ensure fair comparisons, we utilize the exact same model setup as \cite{Breyer2000From}, with the only difference being a more general graph structure. We study the Gibbs measure on a large graph, where each vertex corresponds to a real-valued random variable. The graph may be finite or infinite. It's worth noting that  for finite systems, Gibbs measure, Gibbs distribution, or Boltzmann distribution are used interchangeably, whereas one can only use the terminology Gibbs measure for infinite systems. 
Our graph structure is endowed with a generalized additive property, as depicted in Figure \ref{fig:additive_illustration}. By defining additivity in terms of the graph distance, we introduce flexibility, as demonstrated in the right figure of Figure \ref{fig:additive_illustration}, which illustrates the lattice-distortion used in \cite{hou2015lattice} on induced magnetic transition from low-temperature antiferromagnetism to high-temperature ferrimagnetism.

\item \textbf{Definitions on converging Hilbert spaces.} In contrast to the Mosco convergence of Dirichlet forms on the same space introduced in \cite{zanella2017dirichlet}, we introduce the  Mosco convergence of Dirichlet forms on changing Hilbert spaces. 
 Specifically, we work on a sequence of Hilbert spaces $\{\mathcal{H}_n\}_{n\in \mathbb{N}}$ with  $\mathcal{H}_n= L^2(\mathbb{R}^{V_n};\pi_n)$ converging to another Hilbert space $\mathcal{H}= L^2(\mathbb{R}^V;\pi)$ in the sense of Definition \ref{def:Hiblertspaces_cvg}, where $\{V_n\}_{n\in \mathbb{N}}$ is an exhausting sequence of subsets of the vertex set $V$. We identify the Dirichlet form $\mathcal{E}^n$ on $\mathcal{H}_n$ that is associated with the $n$-dimensional Markov chain and the limiting Dirichlet form $\mathcal{E}$ on $\mathcal{H}$. We use the definition of Mosco convergence on changing Hilbert spaces (Definition $2.11$, \cite{kuwae2003convergence}), which is an improved version of the original definition of Mosco convergence on a fixed Hilbert space (Definition $2.1.1$, \cite{mosco1992composite}).

\item \textbf{Technical contributions.} 
Our proof strategy differs significantly from that of \cite{zanella2017dirichlet}, primarily due to differences in definitions used and the distributions focused on. With the i.i.d. product-formed target distribution, their proof started with a logarithmic transformation that generated a desired summation form, and ended using the inequality that $|(1\wedge ab)-(1\wedge b)|\leq |1-a|$ for any $a,b>0$ to separate the components in their target distribution. However, that  approach is not applicable to non-product-formed target distributions. Consequently, we must devise strategies capable of addressing changing Hilbert spaces and general distributions simultaneously. Given the highly technical nature of the proofs, we outline the encountered challenges and provide proof sketches in Sections \ref{sec:Proofsa_cvg}, \ref{sec:ProofsM1_sketch}, and \ref{sec:ProofsM2_sketch}, before presenting the formal proofs of Theorem \ref{thm:a_convergence} and Conditions (M$1$) and (M$2$), respectively. The fulfillment of these two conditions in Definition \ref{def:Mosco_converges} achieves the Mosco convergence of $\mathcal{E}_n$ on $\mathcal{H}_n$ to $\mathcal{E}$ on $\mathcal{H}$.

\item \textbf{Strength of the Dirichlet form approach.}  There are two major advantages to employing Dirichlet form approaches. First, it usually requires only modest regularity assumptions. 
In the seminal work of \cite{Breyer2000From}, Hypotheses (H$1$)-(H$6$) were assumed in Theorem \ref{thm:classical_results}. We adopt their Hypothesis (H$2$) alongside our Assumptions (A$1$) and (A$2$). Specifically, Assumption (A$1$) broadens the scope of models considered compared to Hypothesis (H$1$) by relaxing assumptions, removing the requirement for the target density function to possess continuous third-order derivatives. Assumption (A$2$) eliminates the need for bounded third-order derivatives as stipulated in Hypothesis (H$3$).  We remove Hypotheses (H$4$), 
(H$5$), and (H$6$). See Examples \ref{example:no_3rd}, \ref{example3}  and \ref{example4} for instances that do not satisfy Hypotheses (H$3$), (H$4$) and (H$5$), respectively. 
No example satisfying Hypothesis (H$6$) was provided in \cite{Breyer2000From}, which is difficult or even impossible to verify.

Second, the abstract and powerful theory of Dirichlet forms, particularly the concept of Mosco convergence, enables direct work on infinite-dimensional spaces. With the diffusion approach, \cite{Breyer2000From} could only establish the existence of the limiting diffusion on the state space $L^2(\mathbb{R}^{\mathbb{Z}^d}; \pi)$, which should be $\mathbb{R}^{\mathbb{Z}^d}$.  In fact, the infinite-dimensional space $\mathbb{R}^{\mathbb{Z}^d}$ is not a Banach space because its topology cannot be derived from any norm. They have to reduce the state space to a Hilbert space. Such complications are naturally circumvented with the Dirichlet form approach.
\end{enumerate}

\subsection{Organization of the paper}
\label{sec:Organization}
The remaining sections of the paper unfold as follows. In Section \ref{sec:Optimalscaling}, we delve into the Gibbs measure and the RWM sampler, while also summarizing the existing optimal scaling results.
Our main results are outlined in Section \ref{sec:mainresults}, encompassing the necessary definitions tailored to changing Hilbert spaces and Mosco convergence of Dirichlet forms.
All technical proofs are deferred to Section \ref{sec:Technical_proofs}, accompanied by comments on the encountered challenges and proof outlines.
Section \ref{sec:Discussion_and_conclusion} offers a discussion and conclusion for this paper. In Appendix \ref{sec:Numerical}, we present a numerical demonstration of how to apply the optimal scaling results obtained in this paper to a practical example that does not satisfy the assumptions enforced in \cite{Breyer2000From}.

\section{Optimal scaling for the RWM algorithm}
\label{sec:Optimalscaling}
In this section, we first describe the Gibbs measure
in Section \ref{sec:Gibbs_measures} and the RWM sampler to learn the Gibbs measure
in Section \ref{sec:MHRW}, summarize the
existing weak convergence results using the diffusion approach in \cite{Breyer2000From}
in Section \ref{sec:existing_results}, and at last illustrate the resulting optimal scaling for the RWM algorithm 
in Section \ref{sec:Optimal_scaling}.

\subsection{Gibbs measures}
\label{sec:Gibbs_measures}
The Gibbs measure is a generalization of the canonical ensemble to infinite systems, where a canonical ensemble is the statistical ensemble that represents the possible states of a mechanical system. The statistical ensemble gives the probability of the system $X$ being state $x$ as
\begin{equation}
	\label{eqn:exponential_family}
\pi(dx)=\frac{1}{Z_P}\exp(-H(x))\mu(dx).
\end{equation}
Here, $H(x)$ is the function from the space of states to the real numbers. It is called the Hamiltonian or the energy function in statistical physics, representing the energy of the configuration $x$. The normalizing constant $Z_P$ is called the partition function and $\mu$ is a reference measure on the infinite system. 
We consider the Gibbs random field on a large graph $G=(V,E)$ where $V$ is a countable set
of vertices and $E$ is a set of edges. The countable set $V$ is considered to be located on the $d$-dimensional Euclidean space $\mathbb{R}^d$.
Here, $G$ can be a finite graph such that the cardinality of the vertex set $|V|<\infty$ or an infinite graph such that $|V|=\infty$. In this paper, we focus on the challenging infinite case which has the $d$-dimensional integer lattice $\mathbb{Z}^d$ as a special case. Each vertex $k\in V$ is given by a real-valued random variable $X_k$ whose realization is $x_k$. 

Although the Gibbs measure was proposed to provide a framework to directly study infinite systems, the energy function may not be finite in infinite systems. Hence, the Gibbs measure is defined in terms of the conditional probabilities that it induces on each finite subsystem. We consider $\{V_n\}_{n\in \mathbb{N}}$ as an exhausting sequence of subsets such that $V_n\rightarrow  V$ as $n\rightarrow \infty$.  We denote $x_{V_n}=\{x_k\}_{k\in V_n}$ and call it a configuration on $V_n$. Similarly, we let $x_{V_n^c}=\{x_k\}_{k\in V\backslash V_n}$. Then for a configuration $x\in \mathbb{R}^{V}$ on the infinite system, we can write $x=(x_{V_n^c},x_{V_n})$. Correspondingly, we define $\sigma$-algebras $\mathcal{F}_{V_n}=\sigma(x_{V_n})$ and $\mathcal{F}_{V_n^c}=\sigma(x_{V_n}^c)$. Instead of analyzing the possibly infinite Hamiltonian $H(x)$ on the infinite system, the Gibbs measure focuses on the Hamiltonian $H_{W}(z_{W^c},x_{W})$ restricted on any finite subsystem $W\subset  V$, conditional on some fixed boundary configuration $z_{W^c}$. In this paper, we consider the reference measure as the Lebesgue measure and set $dx_{W}=\prod_{k\in W}dx_k$.
Define
\begin{equation}
\label{eqn:big_pi_0}
\pi_{W}(z,dx)=\frac{1}{Z_{W}}\exp(-H_{W}(z_{W^c},x_W))dx_{W}.\end{equation}
Then the Gibbs measure is defined as the family
\begin{equation}
\label{eqn:big_pi}
\Pi=\Big\{\pi_{W}(z,dx):  W\subset  V\text{ being finite and } z\in \mathbb{R}^V\Big\}
\end{equation}
satisfying the following consistency condition:
$$\int\pi_{W}(z,dy)\pi_{U}(y,dx)=\pi_{W}(z,dx), \qquad U\subset W\subset  V.$$

The Gibbs measure is very broad. The Hammersley–Clifford theorem implies that any probability measure that satisfies a Markov property is a Gibbs measure for an appropriate choice of Hamiltonian. Hence, proper assumptions are usually imposed. We follow \cite{Breyer2000From} in adopting the following two restrictions, both of which are often satisfied in the statistical analysis of certain spatial models. First, the Gibbs measure is assumed to be transition invariant, i.e., $\pi\circ\oplus_l=\pi$ for all $l\in  V$ where $\oplus_l$ denotes the shift transformation $\oplus_l x_j=x_{j+l}$ for $j\in  V$ and $x\in \mathbb{R}^V$. Second, we consider the Gibbs measure that has finite-range interactions. 
That is, for any finite set $W\subset  V$, the Hamiltonian $H_{W}$ with boundary condition $z_{W^c}\in \mathbb{R}^V$ is the function
\begin{equation}
H_{W}(z_{W^c},x_W) =-\sum_{k\in W} h_k(z_{W^c},x_W),\qquad x\in \mathbb{R}^V,
\end{equation}
where $h_k$ is a function of a finite number of components. 
Then by equation \eqref{eqn:big_pi_0}, the probability measure $\pi_W(z, dx)$ is given by
$$\pi_{W}(z,dx)=\frac{1}{Z_{W}}\exp\left(\sum_{k\in W} h_k(z_{W^c},x_W)\right)dx_{W}.$$
Throughout the paper, the dependence on the boundary condition shall be ignored for simplicity when no confusion is raised. 

\subsection{RWM sampler}
\label{sec:MHRW}

Most probability distributions $\pi_n (=\pi_{V_n})$ on $\mathbb{R}^{V_n}$ can be approximated by the discrete-time Markov chain $X^n$ generated by the RWM algorithm. The Markov chain has $\pi_n$ as its equilibrium distribution. The states of the chain after deleting the initial ``burn-in" steps are then used as samples from $\pi_n$.
That is, if the initial state $X^n(0)$ is not distributed according to $\pi_n$, mixing time analysis is involved which analyzes the limiting behavior of the chain as $t\rightarrow \infty$. However, the optimal scaling problem is in another perspective that focuses on the asymptotic analysis as $n\rightarrow \infty$. Hence, in optimal scaling literature, initial values are mainly assumed to be distributed as $\pi_n$, which will be the case in this paper. Given that $X^n$ is time-homogeneous, we only need to describe its first step while the remaining steps follow immediately from its Markov property.  

Generate the $n$-dimensional initial value $x^n(0)$ according to $\pi_n$. Let $U$ be a uniform random variable on $[0,1]$. Suppose that $R=(R_i)_{i=1}^{\infty}$ is a sequence of i.i.d. random variables having a symmetric probability density function (PDF) $\varphi(\cdot)$ and unit variance. 
We note that the PDF of the normal distribution is a popular choice of $\varphi(\cdot)$, but we do not restrict it here.
Denote $R^n=(R_1,\ldots,R_n)$. We let $X^n(0)$, $R^n$ and $U$ be independent of each other. Denote the density of $\pi_n$ (also known as the Radon–Nikodym derivative) as 
$$\psi_n(x)=\pi_n(dx)/dx_{V_n}.$$
The first step $X^n(1)$ is generated as follows
\begin{equation}
	\label{eqn:X1X0}
	X^n(1)=X^n(0)+A_n\sigma_n R^n,
\end{equation}
where $A_n=1$ if $U<a_n(X^n(0), X^n(0)+\sigma_nR^n)$ and  $A_n=0$ otherwise, with
\begin{equation}
\label{eqn:def_acceptance_probability}
a_n\big(X^n(0), X^n(0)+\sigma_nR^n\big) :=1\wedge\dfrac{\psi_n\big(X^n(0)+\sigma_nR^n\big)}{\psi_n\big(X^n(0)\big)}
\end{equation}
being the acceptance probability. We follow \cite{Breyer2000From} in assuming that $\sigma_n=\tau/\sqrt{n}$ and $R_i$ has a finite fourth moment. 
The RWM algorithm under investigation is provided in Algorithm \ref{MHAlgorithm}. 
\begin{algorithm}[!htb]
\noindent\begin{tabular}{l}
Initialize\\
\asp\asp     Generate initial states $x_{0}^n\sim \pi_n$.\\
\asp\asp     Set $t=0$.\\
Iterate\\
\asp\asp     Generate each of $(r_1,\ldots, r_n)$ from a symmetric  PDF $\varphi(\cdot)$ with unit variance.\\
\asp\asp     Generate a candidate state $y^n=x^n(t)+\tau n^{-1/2} (r_1,\ldots, r_n)$.\\
\asp\asp     Calculate the acceptance probability $ a_n(x^n(t),y^n)=\min \left(1,{\frac {\psi_n(y^n)}{\psi_n(x^n(t))}}\right)$.\\
\asp\asp    Accept or reject:\\
\asp \asp\asp\asp    Generate a uniform random number $u\in [0,1]$.\\
\asp \asp\asp\asp    If $u\leq a_n(x^n(t),y^n)$, then set $x^n(t+1)=y^n$ else set $x^n(t+1)=x^n(t)$.\\
\asp\asp    Increment: set $t=t+1$.
\end{tabular}
\caption{(The RWM algorithm)}
\label{MHAlgorithm}
\end{algorithm}

\subsection{Existing results in \cite{Breyer2000From}}
\label{sec:existing_results}

The diffusion approach is based on the uniform convergence of generators. Specifically, according to Theorem $8$ (on page $199$) of \cite{Breyer2000From}, for $A^n$ being the discrete-time generator of the Markov chain $X^n$ and $A$ being the continuous-time generator of the limiting diffusion process $X$, under Hypotheses (H$1$-H$5$) in Theorem \ref{thm:classical_results} below, they proved
\begin{equation}
\label{eqn:generate_cvg}
\lim_{n\rightarrow \infty} |n A^nf(x)-Af(x)|=0,
\end{equation}
where $f: \mathbb{R}^{\mathbb{Z}^d}\rightarrow \mathbb{R}$ is any bounded differentiable test function that depends on at most a finite number of coordinates. Then by Corollary $8.9$ (on page $233$) of \cite{ethier2009markov}, with the additional Hypothesis (H$6$), they obtained the desired weak convergence of the time-rescaled $n$-dimensional Markov chain $X^n([tn])$ to $X$, where $[\,\cdot\,]$ stands for the integer part.

Before we display their weak convergence result (Theorem $13$, page $204$ therein) in the following theorem, we first provide the necessary definitions. 
We denote the partial derivative of a function $f$ with respect to the $k$-th component of its argument as $D_kf$ and $D_{x_k}f$ interchangeably. Analogously, we define higher-order partial derivatives, such as $D_{x_kx_j}f$ being the second-order partial derivative of $f$ with respect to the $k$-th component and the $j$-th component of its argument. The space of continuous functions $f$ is denoted as $C$, while $C^1$ is the set of such $f\in C$
that $D_k f \in C$ holds for all 
$k$. We denote $C^2$ and $C^3$ analogously.

\begin{thm}[\cite{Breyer2000From}] 
	\label{thm:classical_results}
Suppose that the following Hypotheses (H$1$)-(H$6$) hold:
\smallskip

\begin{itemize}	
\item \textbf{Hypothesis (H$1$).} Let $\mathcal{V}$ be a finite subset of $\mathbb{Z}^d$ such that $0\in \mathcal{V}$ and $v\in \mathcal{V}$ implies also that $-v\in \mathcal{V}$. For each $k\in \mathbb{Z}^d$, let $h_k:\mathbb{R}^{\mathcal{V}+k}\rightarrow \mathbb{R}$ be $C^3$, and such that $h_k \circ \oplus_l=h_{k+l}$. Assume that the family of probability measures $\Pi$ defined by \eqref{eqn:big_pi} 
is tight in the local topology and $\Pi\in G_{\oplus}(\Pi)$ the set  of  translation-invariant Gibbs distributions. 
\smallskip

\item \textbf{Hypothesis (H$2$).} The family $(\pi_n)$ is constructed from a sequence $(V_n)$ that is increasing such that $|V_n|=n$ and satisfies 
\begin{equation}
\label{eqn:boundary_ratio}
\sup_n\frac{|V_n'|}{|V_n|}<\infty,\qquad V_n'=\text{convex hull of }V_n,
\end{equation}
and as $n\rightarrow \infty$
\begin{equation}
\label{eqn:radius_cond}
\sup\{\text{radius of a sphere entirely contained in }V_n\}\rightarrow \infty.
\end{equation} 
Moreover, assume
\begin{equation}
\label{eqn:boundary_cond}
|\mathcal{V}+\partial V_n|/|V_n|<Cn^{-\alpha},
\end{equation} 
for some $\alpha>0$, where $$\partial V_n=\big\{k\in \mathbb{Z}^d:k+\mathcal{V}\nsubseteq V_n\big\}.$$

\item \textbf{Hypothesis (H$3$).} For every $m\in  \mathbb{Z}^d$, the second-order and third-order derivatives of $h_m$ are bounded:
$$\|D_{x_ix_j}h_k\|_{\infty}+\|D_{x_lx_mx_n}h_p\|_{\infty}<\infty,\qquad i,j,k,l,m,n,p\in  \mathbb{Z}^d.$$

\item \textbf{Hypothesis (H$4$).} The Gibbs measure $\pi\in G_{\oplus}(\Pi)$ satisfies, for some $\delta>1$,
$$\int |x_k|^{2\delta}\pi(dx)<\infty,\qquad k\in  \mathbb{Z}^d.$$

\item \textbf{Hypothesis (H$5$).} For each $k\in  \mathbb{Z}^d$, the functions $h_k(x)$ satisfies the Lipschitz and growth conditions (with Euclidean norm)
\begin{align*}
\max_{v\in \mathcal{V}}\|D_{x_v}h_k(x)-D_{x_v}h_k(y)\|_{\mathbb{R}^\mathcal{V}}\leq & C\|x-y\|_{\mathbb{R}^\mathcal{V}},\quad x,y\in \mathbb{R}^\mathcal{V},\\
\max_{v\in \mathcal{V}}\|D_{x_v}h_k(x)\|_{\mathbb{R}^\mathcal{V}}\leq & C(1+\|x\|_{\mathbb{R}^\mathcal{V}}),\quad x\in \mathbb{R}^\mathcal{V}.
\end{align*}

\item \textbf{Hypothesis (H$6$).} 
Define the strong mixing coefficient $\alpha_{\pi}(\mathcal{F}_{U},\mathcal{F}_{W})$ of $\pi$ for any two subsets $U$ and $W$ of $\mathbb{Z}^d$ as
$$\alpha_{\pi}(\mathcal{F}_{U},\mathcal{F}_{W})=\sup\Big\{\big|\pi(A\cap B)-\pi(A)\pi(B)\big|:A\subset \mathcal{F}_{U},\, B\subset \mathcal{F}_{W}\Big\},$$
and define the distance between sets as
$$\operatorname{dist}(A,B)=\min \left\{\max_{i\leq d}|a_i-b_i|:a\in A,\, b\in B\right\}.$$
Suppose there exists $\epsilon>0$ such that
$$\sum_{r=1}^{\infty}(r+1)^{3d-1}|\alpha_{\pi}(r)|^{\epsilon/(4+\epsilon)}<\infty,$$
where $$\alpha_{\pi}(r)=\sup\Big\{\alpha_{\pi}(\mathcal{F}_{A+\mathcal{V}},\mathcal{F}_{B+\mathcal{V}}):\operatorname{dist}(A,B)\geq r,\, |A|=|B|=2\Big\}.$$
\end{itemize}	
Then as $n\rightarrow \infty$, the expected acceptance rate of the algorithm
	\begin{equation*}
	\mathbb{E}[a_n(x^n, x^n+\tau n^{-1/2}R^n)]\rightarrow c(\tau), \quad \pi-a.e.\; x,
	\end{equation*}
where $c(\tau)$ is given in equation $(25)$ (on page $196$) of \cite{Breyer2000From} as  
\begin{equation}
\label{eqn:v_def}
c(\tau)=\mathbb{E}\Big(1\wedge \exp\left(-\tau s(\pi)Z-(1/2)\tau^2 s^2(\pi)\right)\Big)
\end{equation}
with $s(\pi)$ given in equation $(21)$ (on page $192$) of \cite{Breyer2000From} as
\begin{equation}
\label{eqn:s_pi_def}
 s(\pi)=\bigg(\int \big(D_0H_\mathcal{V}(x)\big)^2\pi(dx)\bigg)^{1/2}<\infty,
  \end{equation}
and $Z$ being a standard normal random variable.
Furthermore, in the topology of local convergence as $n\rightarrow \infty$
$$X^n([tn])\Rightarrow X(t) \quad\quad a.e.\; \pi,$$
where $X$ is the infinite-dimensional Langevin diffusion solving 
\begin{equation}
\label{eqn:limit_SDE}
X(t)=X_0+\int_{0}^{t}\tau \sqrt{c(\tau)}dB(s)-\frac{1}{2}\int_{0}^{t}\tau^2c(\tau)\nabla H(X(s))ds,
\end{equation}
where $X_0$ is distributed according to $\pi$ and $B(s)$ is an infinite-dimensional standard Brownian motion. 
\end{thm}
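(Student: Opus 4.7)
My plan is to follow the diffusion approach of \cite{Breyer2000From}: first establish uniform convergence of the rescaled discrete generators $nA^n$ to the generator $A$ of the Langevin diffusion in \eqref{eqn:limit_SDE} on a dense class of cylinder test functions, then invoke Corollary $8.9$ of \cite{ethier2009markov} to lift this to weak convergence of $X^n([tn])$ to $Z$. For a bounded smooth $f$ depending on finitely many coordinates $S\subset\mathbb{Z}^d$, set $\Delta_n:=-\bigl(H_n(x+\sigma_n R^n)-H_n(x)\bigr)$ so that the acceptance probability is $1\wedge e^{\Delta_n}$ and $nA^nf(x)=n\,\mathbb{E}\bigl[(1\wedge e^{\Delta_n})(f(x+\sigma_nR^n)-f(x))\bigr]$. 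With $\sigma_n=\tau/\sqrt{n}$, I would Taylor-expand $f$ to second and $H_n$ to third order in $\sigma_n$, using finiteness of $S$ to keep the $f$-expansion finite, and using that the $R_k$ are i.i.d.\ centered with unit variance and finite fourth moment. Hypotheses (H$3$) and (H$5$) provide uniform control of the remainders on $\pi$-typical configurations, and Hypothesis (H$2$), via \eqref{eqn:boundary_cond}, ensures boundary contributions vanish.

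The analytical core is the convergence of the expected acceptance rate to $c(\tau)$. The key step is the distributional convergence, under $x\sim\pi$ and independent $R^n$,
$$
\Delta_n = -\sigma_n \sum_{k\in V_n} D_k H_n(x)\,R_k - \tfrac{\sigma_n^2}{2}\sum_{k,j\in V_n} D_{kj}H_n(x)\,R_kR_j + O(\sigma_n^3) \;\Longrightarrow\; -\tau\,s(\pi)\,Z - \tfrac{1}{2}\tau^2 s^2(\pi),
$$
with $Z$ standard normal. Conditionally on $x$, the first summand is asymptotically Gaussian with variance $\sigma_n^2\sum_k(D_kH_n(x))^2$; by translation invariance of $\pi$ and the spatial ergodic theorem, backed by the mixing decay in (H$6$), this variance converges $\pi$-a.e.\ to $\tau^2 s^2(\pi)$. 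The second summand is handled by another application of the ergodic theorem together with the integration-by-parts identity $\int D_{kk}H\,d\pi=\int(D_kH)^2\,d\pi=s^2(\pi)$, giving the deterministic limit $-\tfrac{1}{2}\tau^2 s^2(\pi)$. The $O(\sigma_n^3)$ remainder is controlled by the bounded third derivatives of (H$3$) and the fourth-moment bound on $R$. Applying the bounded continuous map $u\mapsto 1\wedge e^u$ together with uniform integrability then yields $\mathbb{E}[a_n]\to c(\tau)$.

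Substituting back into $nA^nf(x)$ and collecting orders via a Taylor expansion of the acceptance probability, one obtains
$$
Af(x)=\tfrac{\tau^2 c(\tau)}{2}\sum_{k\in S} D_{kk}f(x)-\tfrac{\tau^2 c(\tau)}{2}\sum_{k\in S} D_kH(x)\,D_kf(x),
$$
the Langevin generator corresponding to \eqref{eqn:limit_SDE}. The coefficient $\tau^2 c(\tau)/2$ (rather than $\tau^2/2$) is the signature effect of Metropolis rejection: the odd part of the first-order Taylor term of $f$ pairs with $\mathbf{1}\{\Delta_n<0\}(e^{\Delta_n}-1)$, and after one further Taylor expansion of the acceptance probability plus an ergodic argument produces exactly the $-\nabla H\cdot\nabla f$ drift with the correct prefactor; this is also how $\pi$ is recognized as invariant for the limit. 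Collecting all pieces gives the pointwise convergence \eqref{eqn:generate_cvg}.

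The main obstacle will be promoting this generator convergence to functional weak convergence on the enormous state space $\mathbb{R}^{\mathbb{Z}^d}$. Hypothesis (H$6$) does the heavy lifting: its strong-mixing decay supplies enough spatial decorrelation to verify, in the local topology, both the core/density condition for $A$ on finite-coordinate cylinder functions and the compact containment condition required by Corollary $8.9$ of \cite{ethier2009markov}. Hypothesis (H$4$), the $2\delta$-moment bound, upgrades the ergodic-theorem limits from convergence in probability to $L^1$ and underpins the identification of $\pi$ as the invariant law of $Z$. Combining these ingredients with the generator limit yields $X^n([tn])\Rightarrow Z(t)$ $\pi$-a.e., completing the proof.
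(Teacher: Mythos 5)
Your proposal follows exactly the route this paper describes for Theorem \ref{thm:classical_results}: the result is quoted from \cite{Breyer2000From} rather than proved here, and the paper's own sketch (Section \ref{sec:existing_results}) is precisely your plan --- uniform convergence of the rescaled generators $nA^n$ to the Langevin generator under (H$1$)--(H$5$) via Taylor expansion of the Hamiltonian increment, the CLT plus the spatial ergodic theorem for the acceptance-rate limit $c(\tau)$, and then Corollary $8.9$ of \cite{ethier2009markov} with the mixing Hypothesis (H$6$) to lift generator convergence to weak convergence of $X^n([tn])$. The only minor quibble is that the ergodic-theorem limits need only (H$2$) and ergodicity of $\pi$, not the mixing decay of (H$6$), which enters later for the tightness/compact-containment step; otherwise your account matches the cited proof.
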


The following theorem provides the theoretical ergodicity support for both the diffusion approach and the Dirichlet form approach; see, Theorem $3.7$ on page $138$ of \cite{nguyen1979ergodic} for the setting of the general set $V$, whose special case as $\mathbb{Z}^d$ is covered in 
``Standard Facts'' (on page  $188$) of \cite{Breyer2000From}.
\begin{thm}[Ergodic theorem, \cite{nguyen1979ergodic}]
\label{thm:ergodic_spatial}
For $\pi\in G_{\oplus}(\Pi)$ being the only ergodic measure with respect to the group
of translations $(\oplus_k:k\in  V)$, and for any $f\in L^p(\mathbb{R}^{V};\pi)$ with $1\leq p<\infty$, let $(V_n)$ be an increasing sequence of finite subsets of $V$ such that equations \eqref{eqn:boundary_ratio} and \eqref{eqn:radius_cond} hold. Then
$$\lim_{n\rightarrow \infty}\frac{1}{|V_n|}\sum_{k\in V_n}f\circ \oplus_k=\<\pi,f\>, \qquad \pi\; a.e.\, \text{ and in } L^p(d\pi).$$
\end{thm}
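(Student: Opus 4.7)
The plan is to prove a multi-parameter (spatial) ergodic theorem by extending the Birkhoff--von Neumann template to averages indexed by the sets $V_n\subset V$, with the shape conditions \eqref{eqn:boundary_ratio} and \eqref{eqn:radius_cond} playing the role that half-lines play in the one-dimensional case. Write $S_n f := |V_n|^{-1}\sum_{k\in V_n} f\circ\oplus_k$. By the triangle inequality and translation invariance of $\pi$, $\|S_n f\|_{L^p(\pi)}\leq \|f\|_{L^p(\pi)}$ for every $n$, so once a.e. convergence is established it upgrades to $L^p$ convergence via Vitali's theorem (the $L^p$-boundedness of the averages supplies the uniform integrability). The task is therefore reduced to proving $S_n f \to \<\pi,f\>$ $\pi$-a.s. for $f\in L^1(\pi)$, from which the $L^p$ case follows.

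Following the Banach principle, two ingredients are required. First, a weak-type $(1,1)$ maximal inequality
$$\pi\bigl\{\sup_n |S_n f|>\lambda\bigr\}\leq \frac{C\,\|f\|_{L^1(\pi)}}{\lambda},$$
whose proof proceeds via a Vitali/Wiener-style covering lemma for the family $\{V_n+x:n\in\mathbb{N},\,x\in V\}$: the convex-hull ratio $\sup_n|V_n'|/|V_n|<\infty$ prevents the $V_n$ from degenerating into thin slabs, while the inscribed-sphere condition \eqref{eqn:radius_cond} guarantees isotropic growth, so that a greedy selection of a disjoint sub-family captures a fixed fraction of the total mass. Second, a.e. convergence on a dense subspace: decompose $f=\mathbb{E}_\pi[f\mid\mathcal{I}]+g$ where $\mathcal{I}$ is the $\sigma$-algebra of translation-invariant events; ergodicity of $\pi$ forces $\mathbb{E}_\pi[f\mid\mathcal{I}]=\<\pi,f\>$ a.s., and the $L^2$ mean ergodic theorem shows that coboundaries $h-h\circ\oplus_l$ with $h$ bounded and depending on finitely many coordinates are dense in the orthogonal complement of the $\mathcal{I}$-measurable functions. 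On such coboundaries a telescoping identity bounds $|S_n(h-h\circ\oplus_l)|$ by $2\|h\|_\infty\,|V_n\triangle(V_n-l)|/|V_n|$, which tends to zero by the F{\o}lner-type consequence of \eqref{eqn:boundary_ratio}--\eqref{eqn:radius_cond}. Combining the maximal inequality with a.e. convergence on the dense set yields a.e. convergence for all $f\in L^1(\pi)$.

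The main obstacle is the Vitali covering lemma in the second step: unlike the one-dimensional setting where the ordered structure of $\mathbb{N}$ makes the Hardy--Littlewood maximal inequality essentially immediate, in the spatial setting one must produce disjoint sub-families of $\{V_n+x_i\}$ with a uniform density bound, and verifying that \eqref{eqn:boundary_ratio} together with \eqref{eqn:radius_cond} suffices requires quantitative geometry -- the convex-hull comparison is used to pass from $V_n$ to a regular (ball-like) enlargement on which standard covering arguments apply, while the inscribed-sphere bound supplies a constant independent of $n$. Once this quantitative covering lemma is in place, the remainder of the argument (maximal inequality $\to$ Banach principle $\to$ a.e. convergence $\to$ $L^p$ convergence via Vitali) is routine and mirrors the classical one-parameter proof.
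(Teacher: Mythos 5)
First, a point of comparison: the paper does not actually prove this theorem. It is imported as a black box from Theorem~3.7 of \cite{nguyen1979ergodic} (see also the ``Standard Facts'' of \cite{Breyer2000From}), so there is no in-paper argument to measure your proposal against. Your reconstruction follows the standard Wiener--Tempelman template --- weak-type $(1,1)$ maximal inequality from a covering lemma, a.e.\ convergence on the dense subspace spanned by invariant functions and coboundaries, the Banach principle, then the $L^p$ upgrade --- and this is indeed the architecture of the proof in the cited source, so the skeleton is right. The covering step is also fine in outline: Tempelman's regularity condition $|V_n-V_n|\leq C|V_n|$ does follow from \eqref{eqn:boundary_ratio} via $V_n-V_n\subseteq V_n'-V_n'$ and the Rogers--Shephard inequality for convex bodies, with \eqref{eqn:radius_cond} controlling the lattice-point-versus-volume discrepancy.

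There is, however, a genuine gap in the coboundary step. The bound $|S_n(h-h\circ\oplus_l)|\leq 2\|h\|_\infty\,|V_n\triangle(V_n-l)|/|V_n|$ is correct, but the claimed ``F{\o}lner-type consequence'' $|V_n\triangle(V_n-l)|/|V_n|\to 0$ does \emph{not} follow from \eqref{eqn:boundary_ratio} and \eqref{eqn:radius_cond} alone, because $V_n$ is not assumed convex. Concretely, take $V_n=B_n\cup\{k\in\mathbb{Z}^d:|k|\leq 2n,\ \textstyle\sum_i k_i\ \text{even}\}$: this is increasing, has bounded convex-hull ratio and inscribed radius tending to infinity, yet $|V_n\triangle(V_n-e_1)|$ remains a fixed positive fraction of $|V_n|$, and one can exhibit ergodic $\pi$ for which the averages along such $V_n$ converge to the wrong limit. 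What rescues the argument in the present setting is the additional boundary condition \eqref{eqn:boundary_cond} of Hypothesis (H$2$): if $k\in V_n$ and $k+l\notin V_n$ for some $l\in\mathcal{V}$, then $k\in\partial V_n$, so $|V_n\triangle(V_n-l)|\leq 2|\mathcal{V}+\partial V_n|\leq 2Cn^{1-\alpha}$ for shifts in the interaction neighbourhood, which suffices because coboundaries over a generating set are dense. You must invoke \eqref{eqn:boundary_cond} (or restrict to convex $V_n$, as Nguyen--Zessin effectively do) at this point. A second, minor slip: boundedness of $\|S_nf\|_{L^p}$ does not yield uniform integrability of $|S_nf|^p$, so Vitali's theorem does not apply directly; the standard fix is to get $L^p$ convergence for bounded $f$ by dominated convergence and then pass to general $f\in L^p$ using the uniform contraction $\|S_n(f-g)\|_{L^p}\leq\|f-g\|_{L^p}$.
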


\subsection{Optimal scaling}
\label{sec:Optimal_scaling}

\cite{Breyer2000From} found that there is an optimal way to scale the variance of
the proposal distribution of the RWM algorithm in order to maximise the speed of convergence of the algorithm. This
turned out to involve scaling the variance of the proposal as the reciprocal of dimension. Their findings, established within the framework of weak convergence, demonstrated that the algorithm behaves akin to an infinite-dimensional diffusion process in high dimensions. 
According to the weak convergence result in Theorem \ref{thm:classical_results}, the term $\tau \sqrt{c(\tau)}$ in equation \eqref{eqn:limit_SDE} quantifies the rate at which the limiting diffusion process evolves. Therefore, the exploration occurs at the fastest possible rate precisely when $\tau \sqrt{c(\tau)}$ is maximized.
That is, $\tau \sqrt{c(\tau)}$
 as a function of $\tau$ determines the speed of convergence of the algorithm. 
 
 Recall from equation \eqref{eqn:v_def} that we have
$$c(\tau)=\mathbb{E}\Big(1\wedge \exp\big(-\tau s(\pi)Z-(1/2)\tau^2 s^2(\pi)\big)\Big)$$
where $Z$ is a standard normal random variable. Note that by a standard calculation, for $M\sim \mathcal{N}(\sigma^2/2,\sigma^2)$, one has 
$$\mathbb{E}(1\wedge \exp(-M))=2\Phi\left(-\sigma/2\right),$$
where $\Phi$ is the cumulative distribution function of $Z$; 
this fact is widely used in optimal scaling literature (e.g. page $198$ of \cite{Breyer2000From}).
Therefore, we can rewrite 
\begin{equation}
\label{eqn:v_form}
c(\tau)=2\Phi\left(-\frac{\tau}{2}s(\pi)\right),
\end{equation}
and then 
\begin{align*}
\tau^*=\arg\max_{\tau\in\mathbb{R}^+} \{\tau^2 c(\tau)\}=\arg\max_{\tau\in\mathbb{R}^+} c(\tau)&=\arg\max_{\tau\in\mathbb{R}^+} \left\{2\Phi\left(-\frac{\tau}{2}s(\pi)\right)\right\}\nonumber\\
&\approx 2.38 /s(\pi).
\end{align*}
Plugging $\tau^*$ into equation \eqref{eqn:v_form}, we have 
$$c(\tau^*)=2\Phi\left(-\frac{\tau^*}{2}s(\pi)\right)=2\Phi(-2.38/2)\approx 0.234.$$
That is, 
the actual optimal scaling can be characterized in
terms of the overall acceptance rate of the algorithm whose maximum is $0.234$. 
%

\section{Our main results}
\label{sec:mainresults}
As we introduce Mosco convergence of Dirichlet forms on changing Hilbert spaces to MCMC optimal scaling, in Section \ref{sec:Preliminaries} we provide necessary definitions and important related results. In Section \ref{sec:Dirichlet_forms}, we identify the associated Dirichlet forms. In Section \ref{sec:Mosco_convergence}, we establish Mosco convergence of Dirichlet forms and provide its significance.

\subsection{Preliminaries}\label{sec:Preliminaries}
In contrast to \cite{zanella2017dirichlet}, which focused on the convergence of Dirichlet forms within the same Hilbert space, our Dirichlet form approach works on changing Hilbert spaces. Specifically, we work on a sequence of Hilbert spaces $\{\mathcal{H}_n\}_{n\in \mathbb{N}}$ with  $\mathcal{H}_n= L^2(\mathbb{R}^{V_n};\pi_n)$ converging to another Hilbert space $\mathcal{H}= L^2(\mathbb{R}^V;\pi)$ in the sense of Definition \ref{def:Hiblertspaces_cvg} \citep{kuwae2003convergence}.
 Let us first clarify the inner product and its related norm on $\mathcal{H}$ while the corresponding definitions on $\mathcal{H}_n$ are analogous. 
For any $f$ and $g$ in $\mathcal{H}$, denote the usual $L^2$ inner product by 
$$(f,g)_{\mathcal{H}}=\int f(x)g(x)\pi(dx)$$
and the related norm by
$$\|f\|_{\mathcal{H}}=(f,f)_{\mathcal{H}}=\int f^2(x)\pi(dx).$$
Now, we provide the rigorous definition of convergence of Hilbert spaces.
\begin{definition}[\cite{kuwae2003convergence}]
	\label{def:Hiblertspaces_cvg}
	A sequence of Hilbert spaces $ \{\mathcal{H}_n\}$ converges to a Hilbert space $\mathcal{H}$, if there exists a dense subspace $\widetilde{D}\subset \mathcal{H}$ and a sequence of operators 
	$$\Gamma_n : \widetilde{D}\rightarrow \mathcal{H}_n$$
	with the following property:
	\begin{equation}\label{EQBNPN}
	\lim_{n\rightarrow\infty}\|\Gamma_n f\|_{\mathcal{H}_n}=\|f\|_\mathcal{H}
	\end{equation}
	for every $f\in \widetilde{D}$.
\end{definition}
Here, $\Gamma_n$ is asymptotically close to a unitary operator, however it is not
necessarily injective even for sufficiently large $n$ (page $611$, \cite{kuwae2003convergence}). We note that the usual definitions of strong and weak convergence of elements in Hilbert spaces are for the situation that, all these elements as well as their limits are in the same Hilbert space. Hence, it is necessary to properly define different types of convergence of elements in changing Hilbert spaces. 
\begin{definition}[\cite{kuwae2003convergence}]
	\label{def:convergences}
\hfill
	\begin{itemize}
		\item[(1)] (Strong convergence) We say that a sequence of vectors $\{f_n\}$ with $f_n\in \mathcal{H}_n$ converges to a vector $f\in \mathcal{H}$ strongly as $n$ goes to infinity, if there exists a sequence $\{\widetilde{f}_m\}\subset \widetilde{D}$ such that
		$$
		\|\widetilde{f}_m-f\|_\mathcal{H}\rightarrow 0\quad\text{and}\quad \lim_m\limsup_n \|\Gamma_n\widetilde{f}_m-f_n\|_{\mathcal{H}_n}=0.
		$$
		\item[(2)] (Weak convergence) We say that a sequence of vectors $\{f_n\}$ with $f_n\in \mathcal{H}_n$ converges to a vector $f\in \mathcal{H}$ weakly as $n$ goes to infinity, if
		$$(f_n,g_n)_{\mathcal{H}_n}\rightarrow (f,g)_{\mathcal{H}}$$
		 for every sequence $\{g_n\}$ with $g_n\in \mathcal{H}_n$ converging to $g\in \mathcal{H}$ strongly.
		\smallskip
		
		\item[(3)] (Convergence of operators) Given a sequence of bounded linear operators $B_n$ on $\mathcal{H}_n$, we say $B_n$ strongly converges to a bounded linear operator $B$ on $\mathcal{H}$ as $n$ goes to infinity, if  $B_nf_n$ converges to $Bf$ strongly for every sequence $f_n\in \mathcal{H}_n$ converging to $f\in \mathcal{H}$ strongly.  
	\end{itemize}
\end{definition}


The lemma below states some very useful results in \cite{Kolesnikov2005Convergence}, regarding the above convergences.
\begin{lemma}[\cite{Kolesnikov2005Convergence}]
\label{useful_cvg_equi}
Let $\{f_n\}$ be a sequence with $f_n\in \mathcal{H}_n$ and let $f\in \mathcal{H}$. Then the following holds:
	\begin{enumerate}
		\item If $f_n\rightarrow f$ strongly, then $\|f_n\|_{\mathcal{H}_n}\rightarrow \|f\|_{\mathcal{H}}$.
		\smallskip
		
		\item For every $f\in \mathcal{H}$ there exists a sequence $\{f_n\}$ such that $f_n\rightarrow f$ strongly.
		\smallskip
		
		\item If the sequence of norms $\|f_n\|_{\mathcal{H}_n}$ is bounded, there exists a weakly convergent subsequence of $\{f_n\}$.
		\smallskip

		\item If $\{f_n\}$ is a sequence that weakly converges to $f$, then 
		$$\sup_n\|f_n\|_{\mathcal{H}_n}<\infty\quad\text{and}\quad 
		\|f\|_{\mathcal{H}}\leq \liminf_{n\rightarrow \infty} \|f_n\|_{\mathcal{H}_n}.$$
		Moreover, $f_n\rightarrow f$ strongly if and only if $\|f\|_{\mathcal{H}}=\lim_{n\rightarrow \infty} \|f_n\|_{\mathcal{H}_n}$.
		\smallskip

		\item The sequence $\{f_n\}$ tends to $f$ strongly if and only if $$(f_n,g_n)_{\mathcal{H}_n}\rightarrow (f,g)_{\mathcal{H}}$$
		 for every sequence $\{g_n\}$ with $g_n\in \mathcal{H}_n$ converging to $g\in \mathcal{H}$ weakly.
		\smallskip
		
		\item Assume that $\int_K (\psi_n(x)-\psi(x))^2dx\rightarrow 0$ for any compact set $K$. Then $f_n$ converges to $f$ strongly (resp. weakly) in $\mathcal{H}$ if and only if $f_n\psi_n$ converges to $f\psi$ strongly (resp. weakly) in $L^2$.
	\end{enumerate}
\end{lemma}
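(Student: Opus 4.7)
The plan is to prove the six items in turn, using two shared workhorses beyond Definitions~\ref{def:Hiblertspaces_cvg} and~\ref{def:convergences}: the hypothesis \eqref{EQBNPN} that $\|\Phi_n g\|_{\mathcal{H}_n}\to\|g\|_{\mathcal{H}}$ for $g\in C$, together with its polarized consequence $(\Phi_n g,\Phi_n h)_{\mathcal{H}_n}\to(g,h)_{\mathcal{H}}$ for $g,h\in C$ obtained by expanding $\|\Phi_n(g\pm h)\|_{\mathcal{H}_n}^2$; and the density of $C$ in $\mathcal{H}$. Together these let me port fixed-space Hilbert arguments into the changing-space setting modulo careful $\Phi_n$ bookkeeping.

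For (1), given a strong approximation $\{\widetilde{f}_m\}\subset C$ of $f$, the triangle inequality yields
$$\bigl|\|f_n\|_{\mathcal{H}_n}-\|f\|_{\mathcal{H}}\bigr|\leq\|f_n-\Phi_n\widetilde{f}_m\|_{\mathcal{H}_n}+\bigl|\|\Phi_n\widetilde{f}_m\|_{\mathcal{H}_n}-\|\widetilde{f}_m\|_{\mathcal{H}}\bigr|+\bigl|\|\widetilde{f}_m\|_{\mathcal{H}}-\|f\|_{\mathcal{H}}\bigr|;$$
taking $\limsup_n$ and then $m\to\infty$ kills the three terms via Definition~\ref{def:convergences}(1), \eqref{EQBNPN}, and the density approximation, respectively. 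For (2), I would fix $\widetilde{f}_m\in C$ with $\widetilde{f}_m\to f$ in $\mathcal{H}$ and choose a diagonal index $m(n)\to\infty$ slowly enough that $f_n:=\Phi_n\widetilde{f}_{m(n)}$ satisfies Definition~\ref{def:convergences}(1). For (3), I would fix a countable dense subset $\{g_k\}\subset C$, note that $|(f_n,\Phi_n g_k)_{\mathcal{H}_n}|\leq(\sup_n\|f_n\|_{\mathcal{H}_n})\|\Phi_n g_k\|_{\mathcal{H}_n}$ is bounded uniformly in $n$, and extract by a diagonal argument a subsequence along which $L(g_k):=\lim_n(f_n,\Phi_n g_k)_{\mathcal{H}_n}$ exists for every $k$. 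The functional $L$ is linear and bounded on this dense subspace, so Riesz representation yields $f\in\mathcal{H}$ with $L(\cdot)=(f,\cdot)_{\mathcal{H}}$, and an $\varepsilon$-argument lifts this to weak convergence against any strongly convergent test sequence.

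For (4), uniform boundedness comes from Banach-Steinhaus applied to the functionals $h\mapsto(f_n,h_n)_{\mathcal{H}_n}$, using (2) to exhaust $\mathcal{H}$ with strongly convergent $\{h_n\}$. Lower semi-continuity follows by choosing via (2) a strong approximation $g_n\to f$ and computing $\|f\|_{\mathcal{H}}^{2}=\lim_n(f_n,g_n)_{\mathcal{H}_n}\leq\|f\|_{\mathcal{H}}\liminf_n\|f_n\|_{\mathcal{H}_n}$, while the norm-convergence criterion for strong convergence falls out of expanding $\|f_n-g_n\|_{\mathcal{H}_n}^{2}$ with the same $g_n$. Item (5) combines the polarized \eqref{EQBNPN} for the forward implication with, for the converse, extraction of a weakly convergent subsequence via (3), identification of its limit through the hypothesis, and the norm-convergence criterion from (4) applied with $g_n=f_n$. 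Item (6) reduces to transferring $(f_n,g_n)_{\mathcal{H}_n}=\int f_n g_n\,\psi_n\,dx$ onto standard $L^2$ via a change-of-measure argument; testing against compactly supported continuous $g$ (dense in both spaces) and controlling tails uniformly using the density hypothesis yields the equivalence.

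The main obstacle is (3): manufacturing the weak limit $f\in\mathcal{H}$ out of a family of inner products living on different spaces requires a careful diagonal extraction together with a Riesz representation on a dense subset, and then promoting pointwise convergence on that subset to weak convergence against arbitrary strongly convergent test sequences. Once (3) is in hand, (4)--(6) follow essentially mechanically. Since these facts are established in~\cite{Kolesnikov2005Convergence} in precisely the setting of Definitions~\ref{def:Hiblertspaces_cvg} and~\ref{def:convergences}, I would ultimately cite that reference for the full technical verifications.
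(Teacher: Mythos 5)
The paper states Lemma \ref{useful_cvg_equi} without proof, attributing all six items to \cite{Kolesnikov2005Convergence} within the changing-Hilbert-space framework of Definitions \ref{def:Hiblertspaces_cvg} and \ref{def:convergences}, so your plan of sketching the standard arguments and then citing that reference for the full verification is essentially the paper's own approach. Your sketches of (1)--(5) are the standard ones and are correct in outline; the only step I would tighten is the Banach--Steinhaus invocation in (4), since $C$ is dense but not complete in $\mathcal{H}$ and the assignment $h\mapsto h_n$ of a strong approximant is not canonical, but the cited source supplies the rigorous version of that boundedness argument.
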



\subsection{Dirichlet forms}
\label{sec:Dirichlet_forms}
%

A Dirichlet form is a bilinear form, also known as a quadratic form, satisfying certain regularity conditions. We firstly recall the definition of symmetric bilinear forms here.
\begin{definition}
For a function $\sE: D(\sE)\times D(\sE)\rightarrow \mathbb{R}$ where its domain $D(\sE)$ is some subspace of $\mathcal{H}$, we say that it is a symmetric bilinear form over $\mathbb{R}$, if the following three conditions hold: for any $u,v,w\in D(\sE)$ and $\lambda\in \mathbb{R}$,
\begin{enumerate}
\item $\sE(u,v)=\sE(v,u)$;
\smallskip

\item $\sE(\lambda u,v)=\lambda\sE(u,v)$;
\smallskip

\item $\sE(u+v,w)=\sE(u,w)+\sE(v,w)$.
\end{enumerate}
\end{definition}
Here, we provide a concise definition for symmetric Dirichlet forms, while the general definition covering the non-symmetric case can be seen in Definition $4.5$ (on page $34$) of  \cite{ma2012introduction}.
\begin{definition} 
\label{def:D_form}
We say that a symmetric bilinear form $\sE: D(\sE)\times D(\sE)\rightarrow \mathbb{R}$ is a (symmetric) Dirichlet form on $\mathcal{H}$, if the following three conditions hold: for all $u,v\in D(\sE)$,
\begin{enumerate}
\item $\sE$ is non-negative definite, i.e., $\sE(u)=\sE(u,u)\geq 0$;
\smallskip

\item $\sE$ is closed on $\mathcal{H}$, i.e., $D(\sE)$ is complete with respect to the norm $\sE_1^{1/2}$ where $\sE_1(u,v)=\sE(u,v)+(u,v)_{\mathcal{H}}$;
\smallskip

\item one has $u^{\ast}=\inf(\sup(u,0),1)\in D(\sE)$ and $\sE(u^{\ast})\leq \sE(u)$.
\end{enumerate}
\end{definition}

The (symmetric) Dirichlet form $\sE^n$ associated with  the time-rescaled RWM  chain $X^n([tn])$, takes the following form:
\begin{align}\label{En}
\sE^n(u)=&\frac{n}{2}\int \pi_n(dx)P_n(x,dy)[u(y)-u(x)]^2\nonumber\\
=&\dfrac{n}{2}\mathbb{E}[u(X^n(1))-u(X^n(0))]^2,
\end{align}
for any $u\in D(\sE^n):=\{f\in \mathcal{H}_n:\sE^n(f)<\infty\}$, where $P_n$ is the transition probability; 
see page $11$ of \cite{chen2006eigenvalues} for reference on Dirichlet forms associated with discrete-time Markov chains. 
We commit a slight abuse of notation by identifying $\sE^n$ on $\mathcal{H}_n$ through the following extension:
$$\mathcal{E}^n(\cdot): \mathcal{H}_n\rightarrow \overline{\mathbb{R}}:=\mathbb{R}\cup\{\infty\},\quad\quad \sE^n(u) = \left\{ \begin{array}{lcl}
\sE^n(u), && \mbox{for}
\; u\in D(\sE^n), \\ 
\infty, && \mbox{for}\; u \in \mathcal{H}_n\backslash D(\sE^n).
\end{array}\right.$$

As we conduct asymptotic analysis by sending $n$ to infinity, one crucial step is to identify the limiting form of $\sE^n$. Given that 
the acceptance probability function $a_n$
plays a crucial role in generating $X^n(1)$ from $X^n(0)$ by the recursive equation \eqref{eqn:X1X0} of the RWM algorithm. 
We firstly explore the limiting behavior of $a_n$ in the following lemma, whose proof is postponed to Section \ref{sec:Proof_a_convergence}.
\begin{thm}\label{thm:a_convergence}
	Suppose that Hypothesis (H$2$) in Theorem \ref{thm:classical_results} and the following Assumptions (A$1$) and (A$2$) hold:
	\begin{itemize}	
\item \textbf{Assumption (A$1$).} Let $\mathcal{V}$ be a finite additive subset of $V$ in graph distance. For each $k\in V$, let $h_k:\mathbb{R}^{\mathcal{V}+k}\rightarrow \mathbb{R}$ satisfy $h_k \circ \oplus_l=h_{k+l}$. We assume that the family of probability measures $\Pi$ defined by \eqref{eqn:big_pi} 
is tight in the local topology and $\Pi\in G_{\oplus}(\Pi)$. 
\smallskip


\item \textbf{Assumption (A$2$).} 
For every $m\in  V$, the second-order derivatives of $h_m$ are bounded:
$$\|D_{x_ix_j}h_m\|_{\infty}<\infty,\qquad i,j\in  V.$$

\end{itemize}	
    Then we have that for $N$ being a fixed positive integer, as $n\rightarrow\infty$,     
	$$\mathbb{E}\left[a_n(X^n, X^n+\tau n^{-1/2}R^n)\mid X^n, R^N\right]\longrightarrow c(\tau), \quad \pi-a.e.$$
	where $c(\tau)$ is given in \eqref{eqn:v_def}.
\end{thm}

Although the infinite-dimensional space $\mathbb{R}^{V}$ is not a Banach space because its topology cannot be derived from any norm,
it is a locally convex Polish
space (i.e., a locally convex, separable, and completely metrizable topological space) with respect to the product topology since $V$ is countable (page $479$, \cite{fritz1982stationary}). 
To explore Dirichlet forms in the infinite-dimensional space, we first define a linear space of functions by
\begin{align}
\label{eqn:mathscr_F}
\mathscr{F}C_{b}^\infty=\bigg\{f(x_{1},\ldots, x_{m})\; \Big|\; &x\in \mathbb{R}^{V},\;  m\in \mathbb{N},\; f\in C_b^\infty(\mathbb{R}^m)\bigg\},
\end{align}
where  $C_b^\infty(\mathbb{R}^m)$ denotes the set of all infinitely differentiable (real-valued) functions on $\mathbb{R}^m$ with bounded partial derivatives. By the Hahn-Banach theorem, $\mathscr{F}C_b^\infty$ separates the points of $\mathbb{R}^V$; hence, since $\mathcal{B}(\mathbb{R}^{V})=\sigma(\mathbb{R}^{V})$,
$$\mathscr{F}C_{b}^\infty\text{ is dense in } L^2(\mathbb{R}^V;\pi),$$
by monotone class arguments (page $54$ of \cite{ma2012introduction} and the references therein). 

Next, we need to properly define the partial derivatives in the infinite-dimensional space. Define for $u\in \mathscr{F}C_b^\infty$ and $e_k, z\in \mathbb{R}^{V}$,
$$(\partial u/\partial e_k)(z):=(d/ds)u(z+s e_k)|_{s=0},$$ 
where $e_k=\{\delta_{k,j}\}_{j\in \mathbb{R}^{V}}$ with $\delta$ standing for the Kronecker delta.
Then. by Theorem \ref{thm:a_convergence}, we identify a partial Dirichlet form on $\mathcal{H}$ as 
$$\sE_k(u)=\frac{\tau^2 c(\tau)}{2}\int |\partial u/\partial e_k|^2d\pi.$$
Hence, $(\sE_k, \mathscr{F}C_b^\infty)$ is a 
positive definite symmetric bilinear form on $\mathcal{H}$. By the regularity conditions imposed in Theorem \ref{thm:a_convergence}, and by Theorem $1.2$ (on page $126$) of \cite{albeverio1990partial}, $(\sE_k, D(\sE_k))$
where $D(\sE_k)=\{f\in \mathcal{H}:\sE_k(f)<\infty\}$,
 is a closed extension of $(\sE_k, \mathscr{F}C_b^\infty)$, according to the sense of closedness in Definition \ref{def:D_form}.

Next, recalling that $V$ is countable, we define the linear space
\begin{align}
\label{def:S}
S=\Bigg\{u\in \bigcap_{k\in V}D(\sE_k)\;\Bigg|\; &\text{there exists a }\mathcal{B}(\mathbb{R}^{V})/\mathcal{B}(\mathcal{H})-\text{measurable function }\nonumber\\
&\nabla u: \mathbb{R}^V\rightarrow \mathcal{H}\text{ such that, for each }k\in V,\nonumber\\
&(\nabla u(z),k)_{\mathcal{H}}=(\partial u/\partial e_k)(z)\text{ for }\pi-\text{a.e. } z\in \mathbb{R}^V\nonumber\\
& \text{and } \int  |\nabla u|^2d\pi<\infty
 \Bigg\}, 
\end{align}
and then for $u\in S$ we define
\begin{align}\label{E}
\sE(u)=\sum_{k\in V}\sE_k(u)=\frac{\tau^2 c(\tau)}{2}\int |\nabla u|^2 d\pi.
\end{align}
By Theorem $1.6$ (on page $128$) of \cite{albeverio1990partial} and the regularity conditions imposed in Theorem \ref{thm:a_convergence}, we know that $(\sE,  S)$ is a Dirichlet form. At last, we commit a slight abuse of notation by identifying $\sE$ on $\mathcal{H}$ through the following extension:
$$\mathcal{E}(\cdot): \mathcal{H}\rightarrow \overline{\mathbb{R}},\quad\quad\sE(u) = \left\{ \begin{array}{lcl}
\sE(u), && \mbox{for}
\; u\in S, \\ 
\infty, && \mbox{for}\; u \in \mathcal{H}\backslash S.
\end{array}\right.$$

\subsection{Comparison and examples}
\label{sec:Comparison_examples}

	Assumptions (A$1$) and (A$2$) in Theorem \ref{thm:a_convergence} are all the assumptions needed with the Dirichlet form approach, we compare these with Hypotheses (H$1$)-(H$6$) in Theorem \ref{thm:classical_results} in the following:
	\begin{itemize}	 
		\item Assumption (A$1$) broadens the scope of models by relaxing constraints on regularity conditions and the graph structure.
		\begin{itemize}
			\item Assumption (A$1$) eliminates the necessity for $h_k$ to be $C^3$ for every $k\in \mathbb{Z}^d$.
			
			\item Recall that $\mathbb{Z}^d$ is a special case of the graph $G$ here. In Assumption (A$1$), $\mathcal{V}$ as a finite additive subset of $V$ in graph distance extends the strict symmetric setting in Hypothesis (H$1$) by including also the asymmetric setting. In other words, here the additive property that $0\in \mathcal{V}$ and $v\in \mathcal{V}$ implies also that $-v\in \mathcal{V}$, says that $|0-v|=|0+v|$ in the graph distance. Note that, in the special $\mathbb{Z}^d$ setting, it is equivalent to the Euclidean distance-based additive property. One illustration can be seen in Figure \ref{fig:additive_illustration}. 
			
			\item Spatial homogeneity $h_k:\mathbb{R}^{\mathcal{V}+k}\rightarrow \mathbb{R}$ is commonly employed in spatial models; see, for example, page 4 of \cite{JSSv101i08} and page 18 of \cite{ning2024vt}, as well as the references therein.
		\end{itemize}
		
		
		\item Assumption (A$2$) relaxes the requirement for bounded third-order derivatives specified in Hypothesis (H$3$). 
		\smallskip
		
		\item Hypotheses (H$4$)-(H$6$) are removed. No example satisfying Hypothesis (H6) was provided in \cite{Breyer2000From}. Due to its complexity, verifying instances poses a significant challenge, and as a result, we are unable to locate such examples or counterexamples. 
	\end{itemize}



We provide a general example that does not satisfy the bounded third-order derivative constraint in (H3). 
\begin{example}
	\label{example:no_3rd}
	Consider the density of the Gibbs distribution 
	$$
	f(x)=Z_P^{-1} \times \mathrm{e}^{-H(x)}.
	$$
	The energy function could be defined as the sum of clique potentials $V_{\card(c)}(x)$ over all possible cliques $\mathcal{C}$, i.e.,
	$$
	H(x)=\sum_{c \in \mathcal{C}} V_{\card(c)}(x),
	$$
	where $V_{\card(c)}(x)$ depends on the local configuration within the clique $c$, with $\card(c)$ being the cardinality of $c$. Specificially,
	\begin{align}
		\label{eqn:generalU}
		H(x)=\sum_{\{i\} \in \mathcal{C}_1} V_1\left(x_i\right)+\sum_{\left\{i,j\right\} \in \mathcal{C}_2} V_2(x_i, x_j)+\sum_{\left\{i,j,k\right\} \in \mathcal{C}_3} V_3(x_i, x_j, x_k)+\cdots.
	\end{align}
	Consider a compact domain $x\in [-R,R]$ for any $R>0$.
	For example, if $V_1(x_i) \propto x_i^{\alpha}$ for any $\alpha\in (2,3)$, then we have the second-order derivative
	$$V_1''(x_i)=\alpha (\alpha-1)x_i^{\alpha-2}$$
	being bounded in the domain, 
	but  the third-order derivative $$V_1'''(x_i)=\alpha (\alpha-1)(\alpha-2)x_i^{\alpha-3}$$
	which is infinite at $x_i=0$. Or if $V_2(x_i, x_j) \propto x_i^{\alpha-1}\eta(x_j)$ for any differentiable function $\eta$,
	then we have the second-order derivative
	$$\partial^2 V_2(x_i, x_j)/\partial x_i\partial x_j =(\alpha-1) x_i^{\alpha-2}\eta'(x_j)$$
	being bounded in the domain, 
	but the third-order derivative 
	$$\partial^3 V_2(x_i, x_j)/\partial x_i^2\partial x_j =(\alpha-1) (\alpha-2)x_i^{\alpha-3}\eta'(x_j)$$
	which is infinite at $x_i=0$. A concrete practical model is the SABR model (Stochastic Alpha, Beta, Rho) introduced by \cite{hagan2002managing}, along with its diverse multivariate variants (e.g., \cite{ferreiro2014sabr} and the references therein).
	Clearly, for any function $V_i$, as long as one of its partial derivatives up to the third order being unbounded, it fails to meet the requirement of having bounded third-order derivatives. Consequently, this restriction greatly limits the possibilities for graphical modeling.
\end{example}

In the following example, we present one practical example that does not satisfy Hypothesis (H$4$).
\begin{example}
	\label{example3}
	The $q$-Gaussian distribution is a generalization of the Gaussian distribution and finds applications across various domains such as statistical mechanics, geology, anatomy, astronomy, economics, finance, machine learning, and more; see, for instance, \cite{douglas2006tunable,thistleton2007generalized,furuichi2009maximum,vignat2009detection,bercher2012generalized}. The standard $q$-Gaussian distribution has the following PDF:
	$$f(x)={{\sqrt {\beta }} \over C_{q}}\Big[1+(1-q)(-\beta x^{2})\Big]_{+}^{1 \over 1-q}, \qquad q<3,$$
	where the positive parameter $\beta=1/(3-q)$ and $C_{q}$ is the normalization factor.
	For 
	$1\leq q<3$, its support is $x\in (-\infty ;+\infty )$, and for  $q<1$, that is 
	$x\in [-1 /{\sqrt {\beta (1-q)}},1 /{\sqrt {\beta (1-q)}}]$.
	Clearly, when $q=1$ it is the standard Gaussian distribution. 
	When $q\geq 3/2$, its third moment is undefined, hence failing to satisfy Hypothesis (H$4$).
\end{example}

In the following example, we present one practical example that does not satisfy Hypothesis (H$5$).
\begin{example}
	\label{example4}
	The generalized normal distribution (also known as the exponential power distribution) is a family of continuous probability distributions that generalizes the normal distribution by introducing an additional shape parameter \(\beta\), which controls the kurtosis (tailedness) of the distribution. 
	The PDF of a generalized normal distribution is given by
	\[
	f(x \mid \mu, \alpha, \beta) = \frac{\beta}{2\alpha \Gamma\left(1/\beta\right)} \exp\left( - \left(|x - \mu|/\alpha\right)^\beta \right),
	\]	
	where \(\mu\) is the location parameter,
	\(\alpha > 0\) is the scale parameter,
	\(\beta > 0\) is the shape parameter, and 
	\(\Gamma(\cdot)\) is the Gamma function. 
	The shape parameter \(\beta\) controls the tail behavior:
	When \(\beta = 2\), the distribution corresponds to the normal distribution. When \(\beta > 2\), the tails of the distribution are lighter than those of the normal distribution; see, for example, \cite{dytso2018analytical} and \cite{banerjee2013underwater}, along with the references cited therein, for a discussion of its properties and applications.
	
	The first-order derivative of \(\log(f(x \mid \mu, \alpha, \beta))\) is given by
	\[
	\frac{d}{dx} \log(f(x\mid \mu, \alpha, \beta)) = - \beta \frac{|x - \mu|^{\beta - 1}}{\alpha^\beta} \operatorname{sign}(x - \mu),
	\] 	
	where \(\operatorname{sign}(x - \mu)\) is the sign function, which takes the value \(1\) for \(x > \mu\), \(-1\) for \(x < \mu\), and \(0\) at \(x = \mu\). When \(\beta > 2\), $\frac{d}{dx} \log(f(x\mid \mu, \alpha, \beta))$ does not meet the growth condition required by Hypothesis (H$5$). 
A numerical demonstration of how to apply the optimal scaling result derived in the next section, specifically for the case \(\mu = 0\), \(\alpha = 1\), and \(\beta =3\), is presented in Appendix \ref{sec:Numerical}.
\end{example}

\subsection{Mosco convergence of Dirichlet forms}\label{sec:Mosco_convergence}
The definition of Mosco convergence, is originally due to Mosco (Definition $2.1.1$ in \cite{mosco1992composite}, page $375$), on a fixed
Hilbert space.  Later, \cite{kuwae2003convergence} (Definition $2.11$, page $626$) provided the definition of Mosco convergence on changing Hilbert spaces.


\begin{definition}[\cite{kuwae2003convergence}]
	\label{def:Mosco_converges}
	 We say that a sequence of bilinear forms $\{\mathcal{E}^n: \mathcal{H}_n\rightarrow \overline{\mathbb{R}}\}$ Mosco converges to a bilinear form $\mathcal{E}: \mathcal{H}\rightarrow \overline{\mathbb{R}}$, if the following conditions hold:
	\begin{itemize}
		\item[(M$1$)] If a sequence $\{f_n\}$ with $f_n\in \mathcal{H}_n$ converges to $f\in \mathcal{H}$ weakly, then
		$$\mathcal{E}(f)\leq\liminf_{n\rightarrow \infty}\mathcal{E}^n(f_n).$$
		\item[(M$2$)] For every $f\in \mathcal{H}$, there exists a sequence $\{f_n\}$ with $f_n\in \mathcal{H}_n$ converging to $f$ strongly such that 
		$$\mathcal{E}(f)=\lim_{n\rightarrow \infty}\mathcal{E}^n(f_n).$$
	\end{itemize}
\end{definition}

The following is the main result of this paper.
\begin{thm}
	\label{thm:Mosco_cvg}
Under the assumptions imposed in Theorem \ref{thm:a_convergence}, the sequence of Dirichlet forms $\{\mathcal{E}^n: \mathcal{H}_n\rightarrow \overline{\mathbb{R}}\}$  (defined in \eqref{En}) Mosco converges to the Dirichlet form $\mathcal{E}: \mathcal{H}\rightarrow \overline{\mathbb{R}}$ (defined in \eqref{E}).
\end{thm}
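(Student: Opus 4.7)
The plan is to verify the two Mosco conditions (M1) and (M2) from Definition \ref{def:Mosco_converges}, leaning on three pillars: Lemma \ref{thm:a_convergence} to control the acceptance probability asymptotically, the Ergodic theorem (Theorem \ref{thm:ergodic_spatial}) to average shift--invariant functionals of the Hamiltonian, and Lemma \ref{useful_cvg_equi} to translate between norms and inner products on the converging Hilbert spaces $\mathcal{H}_n\to\mathcal{H}$.

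I would start with the easier condition (M2). Because $\mathscr{F}C_b^\infty$ is dense in $\mathcal{H}$ (by the Hahn--Banach/monotone class argument recalled after \eqref{eqn:mathscr_F}), a diagonal argument reduces the task to constructing a recovery sequence for every $f\in \mathscr{F}C_b^\infty$. Since such an $f$ depends on only finitely many coordinates and has bounded derivatives of all orders, set $f_n:=\Phi_n f\in\mathcal{H}_n$, with $\Phi_n$ as in Definition \ref{def:Hiblertspaces_cvg}; asymptotic unitarity together with Lemma \ref{useful_cvg_equi}(1)--(2) gives $f_n\to f$ strongly. Plugging the recursion \eqref{eqn:X1X0} with $\sigma_n=\tau/\sqrt n$ into \eqref{En} and Taylor--expanding yields
$$
\mathcal{E}^n(f_n)=\tfrac{\tau^2}{2}\,\mathbb{E}\!\left[A_n\sum_{k\in V_n}\bigl(\partial_k f_n(X^n(0))\bigr)^2 R_k^2\right]+O(\sigma_n),
$$
where the remainder is controlled by the bounded second derivatives of $h_m$ (Assumption (A$2$)). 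Conditioning on $(X^n,R^N)$, invoking Lemma \ref{thm:a_convergence} to replace $\mathbb{E}[A_n\mid X^n,R^N]$ by $c(\tau)+o(1)$, and using $\mathbb{E}[R_i R_j]=\delta_{ij}$, only the finitely many coordinates on which $f$ depends survive, and the limit reduces to $\tfrac{\tau^2 c(\tau)}{2}\int|\nabla f|^2 d\pi=\mathcal{E}(f)$ thanks to the stationarity of $\pi_n$ and Lemma \ref{useful_cvg_equi}.

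For condition (M1), fix a sequence $f_n\to f$ weakly with $f_n\in\mathcal{H}_n$, and assume without loss of generality $\liminf_n \mathcal{E}^n(f_n)<\infty$. Following the five--step strategy sketched in the introduction, Step 1 applies Cauchy--Schwarz inside \eqref{En}: for any smooth test vector field $g\in\mathscr{F}C_b^\infty(\mathbb{R}^V;\mathcal{H})$, pairing $f_n(X^n(1))-f_n(X^n(0))$ against $\langle g(X^n(0)),R^n\rangle$ yields the one--sided estimate
$$
\bigl(\mathbb{E}[(f_n(X^n(1))-f_n(X^n(0)))\langle g(X^n(0)),R^n\rangle]\bigr)^2\le \tfrac{2}{n}\mathcal{E}^n(f_n)\cdot \mathbb{E}[\langle g(X^n(0)),R^n\rangle^2].
$$
In Step 2, I would reformulate the left--hand side by separating the state dependence from the proposal direction through Taylor expansion of both $f_n$ and the ratio $\psi_n(x+\sigma_n R^n)/\psi_n(x)$ hidden in $A_n$; this is where the non--product structure forces departure from the approach of \cite{zanella2017dirichlet}. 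Step 3 invokes Lemma \ref{thm:a_convergence} to pass $\mathbb{E}[A_n\mid X^n,R^N]\to c(\tau)$ and then performs integration by parts in the $R^n$ variable to produce, after averaging with the Ergodic theorem, a linear functional of $f_n$ against $\nabla g$ with weight $\pi_n$. Step 4 uses Lemma \ref{useful_cvg_equi}(6) to transfer the weak convergence $f_n\to f$ on $\mathcal{H}_n$ to the weak convergence of $f_n\psi_n\to f\psi$ in $L^2$, so the linear functional converges to its analogue with $f$ and $\pi$. In Step 5, taking $\liminf_{n\to\infty}$ and then the supremum over $g$ on the unit ball in $\mathcal{H}$, a duality/integration--by--parts argument recovers
$$
\tfrac{\tau^2 c(\tau)}{2}\int |\nabla f|^2 d\pi\le \liminf_{n\to\infty}\mathcal{E}^n(f_n),
$$
which simultaneously shows $f\in S$ and gives the desired inequality $\mathcal{E}(f)\le \liminf_n \mathcal{E}^n(f_n)$.

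The principal obstacle will be Step 3 of (M1): in the non--product setting, $\psi_n(x+\sigma_n R^n)/\psi_n(x)$ couples all coordinates through $H_{V_n}$, so the Taylor expansion produces cross terms $\sum_{k,j} \partial_{x_k x_j} h_m R_k R_j$ that must be shown to be negligible in $L^2(\pi)$ as $n\to\infty$. Handling these requires combining the finite-range structure of Assumption (A$1$) with Assumption (A$2$) and the Ergodic theorem applied to shift--invariant spatial functionals, along with the newly established Corollary \ref{corollary} to absorb the non--product couplings in the Hamiltonian. A secondary difficulty is that $\Phi_n$ need not be injective or norm--preserving for finite $n$, so in (M2) one must quantify $\|\Phi_n f - f\!\upharpoonright_{V_n}\|_{\mathcal{H}_n}\to 0$ to justify the Taylor expansion on the true $n$-dimensional chain rather than an idealized restriction.
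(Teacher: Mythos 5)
Your treatment of (M$2$) matches the paper's: density of $\mathscr{F}C_{b}^\infty$, recovery sequences $\Phi_n\widehat{f}_m$, Lemma \ref{thm:a_convergence} plus dominated convergence, and a diagonal extraction. The genuine problems are in your argument for (M$1$), in two places. First, in Step 2 you propose to ``Taylor--expand both $f_n$ and the ratio $\psi_n(x+\sigma_nR^n)/\psi_n(x)$.'' But $f_n$ is an arbitrary element of $\mathcal{H}_n=L^2(\mathbb{R}^{V_n};\pi_n)$ with no differentiability whatsoever, so the expansion of $f_n$ is not available; this is exactly the obstruction the paper's proof is built around. The paper never differentiates $f_n$: it performs a change of variables $x^n\mapsto x^n-\tau n^{-1/2}r^n$ in the $x$--integral so that only $f_n(x^n)$ appears, and the difference quotient is transferred onto the smooth, compactly supported test function $\xi$ and the density $\psi_n$, where the (later, genuine) integration by parts happens in the $x^N$ variable. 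Your related suggestion of ``integration by parts in the $R^n$ variable'' would require a Stein--type identity for $\varphi$, which the paper does not assume to be Gaussian.

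Second, your Cauchy--Schwarz test object $\langle g(X^n(0)),R^n\rangle$ omits the acceptance indicator. Writing $f_n(X^n(1))-f_n(X^n(0))=A_n\bigl[f_n(X^n(0)+\sigma_nR^n)-f_n(X^n(0))\bigr]$, your numerator carries one full factor of the acceptance probability (hence $c(\tau)$ in the limit) while your denominator $\mathbb{E}[\langle g,R^n\rangle^2]$ carries none; optimizing over $g$ then yields only
$\liminf_n\sqrt{\sE^n(f_n)}\ \ge\ \tfrac{\tau c(\tau)}{\sqrt{2}}\,\|\nabla f\|_{L^2(\pi)}$,
which is strictly weaker than $\sqrt{\sE(f)}=\tfrac{\tau\sqrt{c(\tau)}}{\sqrt{2}}\|\nabla f\|_{L^2(\pi)}$ since $c(\tau)<1$. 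The paper's test function $\xi(X^N,R^N)\mathbbm{1}_{\{U<a_n\}}$ is chosen precisely so that the denominator becomes $\sqrt{\mathbb{E}[\xi^2 a_n]}\to\sqrt{c(\tau)}\,\|\xi\|$, recovering the correct constant. A further, smaller point: the paper restricts the test function to finitely many coordinates $(x^N,r^N)$ with compact support $K$, which is what makes the $\sqrt{n}$ factor controllable and the final integration by parts legitimate; your vector field $g$ paired with the full $R^n$ does not provide this localization. Finally, the case $\sE(f)=\infty$ (i.e., $f\notin S$) is not disposed of by your duality step and requires the separate argument the paper borrows from \cite{zanella2017dirichlet}.
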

\begin{proof}
According to Definition \ref{def:Mosco_converges}, completing the proof requires establishing both (M$1$) and (M$2$), which we accomplish in Sections \ref{sec:ProofsM1} and \ref{sec:ProofsM2}, respectively.
\end{proof}

We now prepare to provide the significance of Mosco convergence (cf, Theorem $2.4$ in 
\cite{kuwae2003convergence}, page $627$). We associate the Dirichlet form $\mathcal{E}$ with a non-negative self-adjoint operator $-A$, such that $\mathcal{D}(\sqrt{-A})=\mathcal{D}(\mathcal{E})$ and $\sE(u,v)=(-Au, v)$ for $u,v$ in $\mathcal{D}(\mathcal{E})$.
$A$ is referred to as the infinitesimal generator of the semigroup $T_t$, which satisfies the following equation:
\begin{align}
	\label{eqn:Loperator}
Au=\lim_{t\downarrow 0}\frac{1}{t}(T_tu-u)
\end{align}
for $u\in \mathcal{D}(A)$; see, Definition $1.8$ on page $10$ of \cite{ma2012introduction}.  Let $(T^n_t)_{t\geq 0}$ (resp. $(T_t)_{t\geq 0}$) be the semigroup of $\mathcal{E}^n$ (resp. $\mathcal{E}$). 
As outlined in Section \ref{sec:existing_results}, the strong convergence of ${T^n_{t}}$ forms the cornerstone of the diffusion approach. By equation $(26)$ (on page $198$) of \cite{Breyer2000From}, the  infinitesimal generator of $X$ is given by
$$Af(x)=\tau^2c(\tau) \left( -\frac{1}{2}\big\langle \nabla H(x), \nabla f(x) \big \rangle + \frac{1}{2}\Delta f(x) \right),$$
where $c(\tau)$ is given in \eqref{eqn:v_form}.
Then by $$	\sE(u)=\lim_{t\rightarrow 0} \frac{1}{2}\frac{\big\langle(I-T_t)u, (I-T_t)u \big\rangle}{t},
$$
one could verify that $\mathcal{E}$ and $X$ are identical; we refer interested readers to Definition $1.6$ on page $9$, Definition $1.8$ on page $10$, Theorem $2.15$ on page $22$, equation $(1.8)$ on page $91$, of \cite{ma2012introduction}, for reference on Dirichlet forms associated with continuous-time Markov chains. 
\begin{corollary}
	\label{thm:corollary}
	The Mosco convergence of $\mathcal{E}^n$ to $\mathcal{E}$ is equivalent to the strong convergence of $T^n_{t}$ to $T_t$ for every $t>0$. The limiting Dirichlet form $\mathcal{E}$ and the limiting diffusion $X$ in \eqref{eqn:limit_SDE} are identical. Subsequently, the standard optimal scaling procedure outlined in Section \ref{sec:Optimal_scaling} follows.
\end{corollary}

\section{Technical proofs}
\label{sec:Technical_proofs}
In this section, we present the proof of Theorem \ref{thm:a_convergence} in Section \ref{sec:Proof_a_convergence} and conclude the proof of Theorem \ref{thm:Mosco_cvg} by establishing Conditions (M$1$) and (M$2$) in Sections \ref{sec:ProofsM1} and \ref{sec:ProofsM2}, respectively. Throughout the proofs, we employ the following notations: $X^N=(X_1,X_2,\ldots,X_N)$ for a fixed positive integer $N$, $X^n=(X_1,X_2,\ldots,X_n)$,  $X^{n-N}=(X_{N+1},X_{N+2},\ldots,X_n)$ when $n> N$ which is certainly true as we consider sending $n$ to infinity, and $X=(X_1,X_2,\ldots)$.  Analogous notations apply to $R$. Recall that $\psi_n$ represents the density of $\pi_n$, and $\varphi$ stands for the density of $R_i$ for any $i\in \mathbb{N}$. By the independence of $\{R_i\}_i$, we denote $\varphi^n$ (resp. $\varphi^N$, $\varphi^{n-N}$) as the density of $R^n$ (resp. $R^N$, $R^{n-N}$). 
Prior to the rigorous proofs, we elaborate on the challenges encountered in this general model setting and describe the necessary innovations in our proof strategies.

\subsection{Proof of Theorem \ref{thm:a_convergence}}
\label{sec:Proof_a_convergence}

In this section, we present the proof of Theorem \ref{thm:a_convergence}. Before delving into the formal proof in Section \ref{sec:Proofa_convergence}, we outline the proof strategy in a more accessible manner in Section \ref{sec:Proofsa_cvg} to enhance understanding. Section \ref{sec:Proofsa_corollary} covers Corollary \ref{corollary} which will be utilized in proving Condition (M$1$) in the next section.

\subsubsection{Sketch of the proof of Theorem \ref{thm:a_convergence}}
\label{sec:Proofsa_cvg}
The result of Theorem \ref{thm:a_convergence} is analogous to Lemma $17$ and Corollary $18$ (on pages $4064-4067$) of  \cite{zanella2017dirichlet}, but our approach to proving it differs significantly. 
With the product-formed target distribution, their proof started with a logarithmic transformation that yielded a desired summation form (cf. equation $(17)$ on page $4064$) as follows:
$$\log\left(\prod_{i=1}^n\frac{f(x_i+ \tau n^{-1/2}R_i)}{f(x_i)}\right)=\sum_{i=1}^n \Big[\log f(x_i+\tau n^{-1/2}R_i)-\log f(x_i)\Big],$$
and ended with the following inequality:
\begin{align*}
&\Bigg|  \mathbb{E}\Bigg[ 1\wedge \frac{f(X^n+\tau n^{-1/2}R^n)}{f(X^n)} -1\wedge \frac{f(X^{n-N}+\tau n^{-1/2}R^{n-N})}{f(X^{n-N})} \Bigg | X^n, R^N \Bigg] \Bigg|\\
&\leq \Bigg|  \frac{f(X^N+\tau n^{-1/2}R^N)}{f(X^N)} \Bigg|\longrightarrow 0,
\end{align*}
as $n$ goes to infinity. However, neither of these techniques is viable with a general (dependent) target distribution. Consequently, the approach utilized in \cite{zanella2017dirichlet} is not applicable here.

We reduce the problem to establish a weak convergence result of the difference of Hamiltonian functionals (equation \eqref{eqn:cvg_Hamiltonian2}), leveraging the specific form of the acceptance probability function. The weak convergence limit is normally distributed with the mean being half of the variance. Employing the Taylor series expansion, we decompose the difference of Hamiltonian functionals into several terms: the first contributes to a limiting normal distribution with mean zero but desired variance; the second term 
$-\tau^2s^2(\tau)/2$ that is a constant, serves as the desired mean of limiting distribution; the remaining terms converge to zero in probability. Throughout the proof, we rely on the Ergodic theorem for spatial processes (Theorem \ref{thm:ergodic_spatial}) and the imposed regularities.

\subsubsection{Proof of Theorem \ref{thm:a_convergence}}
\label{sec:Proofa_convergence}
	
Throughout the proof, we condition implicitly on $X_i=x_i$ for $i\in \{1,\ldots,n\}$ and $R_j=r_j$ for $j\in \{1,\ldots,N\}$. Denote 
\begin{align}
	\label{eqn:widetildeR}
	\widetilde{R}^n=(\widetilde{R}_1,\ldots,\widetilde{R}_N,\widetilde{R}_{N+1},\ldots,\widetilde{R}_n):=(r_1,\ldots,r_N,R_{N+1},\ldots,R_n).
\end{align}
 By the acceptance probability function defined in \eqref{eqn:def_acceptance_probability}, we have 
	\begin{align*}
		a_n(x^n, x^n+\tau n^{-1/2}\widetilde{R}^n)=1\wedge\dfrac{\exp\big(-H_{V_n}\big(x^n+\tau n^{-1/2}\widetilde{R}^n\big)\big)}{\exp\big(-H_{V_n}\big(x^n\big)\big)}.
	\end{align*}
	To prove
	$$\mathbb{E}\left[a_n(X^n, X^n+\tau n^{-1/2}R^n)\mid X^n=x^n, R^N=r^N\right]\longrightarrow c(\tau), $$
	with $c(\tau)$ (defined in 
	\eqref{eqn:v_def}) given by
	$$c(\tau)=\mathbb{E}\Big(1\wedge \exp\left(-\tau s(\pi)Z-(1/2)\tau^2 s^2(\pi)\right)\Big),$$
	where $Z$ is a standard normal random variable and $s(\pi)$ is given in \eqref{eqn:s_pi_def}, it suffices to show that as $n\rightarrow\infty$, we have that 
	\begin{align}
		\label{eqn:cvg_Hamiltonian2}
		H_{V_n}(x^n+\tau/\sqrt{n} \widetilde{R}^n)-H_{V_n}(x^n)\Longrightarrow \tau s(\pi) Z+\tau^2s^2(\pi)/2.
	\end{align}
	That sufficiency holds by the monotonicity and continuity of the exponential function,
	the fact that a sequence of uniformly bounded random variables is uniformly integrable, and Theorem $3.6$ (on page $31$) of \cite{billingsley2013convergence} which states that if $\{\widehat{Y}_n\}$ are uniformly integrable and $\widehat{Y}_n \Rightarrow \widehat{Y}$ then $\widehat{Y}$ is integrable and $\mathbb{E}\widehat{Y}_n\rightarrow \mathbb{E}\widehat{Y}$.
	
Given that 
$$H_{V_n}(x^n+\tau/\sqrt{n} \widetilde{R}^n)-H_{V_n}(x^n)=\log \Big(1+\big[e^{H_{V_n}(x^n+\tau/\sqrt{n} \widetilde{R}^n)-H_{V_n}(x^n)}-1\big]\Big),$$
taking $J_n=e^{H_{V_n}}$, 
under the imposed conditions, we could 
apply Taylor's expansion and obtain
\begin{align*}
	&H_{V_n}(x^n+\tau/\sqrt{n} \widetilde{R}^n)-H_{V_n}(x^n)\\
	&=\log\Big(1+\big[J_n(x^n+\tau/\sqrt{n} \widetilde{R}^n)/J_n(x^n)-1\big]\Big)\\
	&=\big[J_n(x^n+\tau/\sqrt{n} \widetilde{R}^n)/J_n(x^n)-1\big]-\frac{1}{2}\big[J_n(x^n+\tau/\sqrt{n} \widetilde{R}^n)/J_n(x^n)-1\big]^2\\
	&\quad+\frac{1}{3}\alpha_n\big|J_n(x^n+\tau/\sqrt{n} \widetilde{R}^n)/J_n(x^n)-1\big|^3,
\end{align*}
where $|\alpha_n|\leq 1$. Organizing the above terms, we obtain
\begin{align}
	\label{eqn:split_terms}
	&H_{V_n}(x^n+\tau/\sqrt{n} \widetilde{R}^n)-H_{V_n}(x^n)\nonumber\\
	&=\frac{\tau}{\sqrt{n}}\sum_{i\in V_n} \widetilde{R}_i\frac{D_{x_i} J_n(x^n)}{J_n(x^n)}+\tau^2s^2(\pi)/2\nonumber\\
	&\quad+\Bigg\{\big[J_n(x^n+\tau/\sqrt{n} \widetilde{R}^n)/J_n(x^n)-1\big]-\frac{\tau}{\sqrt{n}}\sum_{i\in V_n} \widetilde{R}_i\frac{D_{x_i} J_n(x^n)}{J_n(x^n)}\nonumber\\
	&\quad\quad\quad-\frac{\tau^2}{2n} \sum_{i,j\in V_n} \widetilde{R}_i \widetilde{R}_j\frac{D_{x_ix_j} J_n(x^n)}{J_n(x^n)}\Bigg\}\nonumber\\
	&\quad-\frac{1}{2}\left\{\big[J_n(x^n+\tau/\sqrt{n} \widetilde{R}^n)/J_n(x^n)-1\big]^2-\frac{\tau^2}{n}\left(\sum_{i\in V_n} \widetilde{R}_i\frac{D_{x_i} J_n(x^n)}{J_n(x^n)}\right)^2\right\}\nonumber\\
	&\quad-\frac{1}{2}\left\{\frac{\tau^2}{n}\left(\sum_{i\in V_n} \widetilde{R}_i\frac{D_{x_i} J_n(x^n)}{J_n(x^n)}\right)^2-\frac{\tau^2}{n} \sum_{i,j\in V_n} \widetilde{R}_i \widetilde{R}_j\frac{D_{x_ix_j} J_n(x^n)}{J_n(x^n)}+\tau^2s^2(\pi) \right\}\nonumber\\
	&\quad+\frac{1}{3}\Big\{\alpha_n\big|J_n(x^n+\tau/\sqrt{n} \widetilde{R}^n)/J_n(x^n)-1\big|^3\Big\}\nonumber\\
	&=:\frac{\tau}{\sqrt{n}}\sum_{i\in V_n} \widetilde{R}_i\frac{D_{x_i} J_n(x^n)}{J_n(x^n)}+\tau^2s^2(\pi)/2 +\mathcal{T}_{n1}-\frac{1}{2}\mathcal{T}_{n2}-\frac{1}{2}\mathcal{T}_{n3}+\frac{1}{3}\mathcal{T}_{n4}.
\end{align}
The proof is complete upon the following results hold:
\begin{enumerate}
	\item the law $\mathcal{L}\left(\frac{\tau}{\sqrt{n}}\sum_{i\in V_n} \widetilde{R}_i\frac{D_{x_i} J_n(x^n)}{J_n(x^n)}\right)
	\rightarrow \mathcal{N}(0,\tau^2 s^2(\pi)),$
	\smallskip
	
	\item[(k)] $\mathcal{T}_{n(k-1)}$ goes to zero in probability as $n$ goes to infinity ($k=2, 3, 4, 5$),
\end{enumerate}
which will be our goals.\\

\noindent \textbf{Proof of $(1)$.} 
By the relation $J_n=e^{H_{V_n}}$, clearly
\begin{align}
	\label{eqn:derivative_equ}
	\frac{D_{x_i} J_n(x^n)}{J_n(x^n)}=D_{x_i} H_{V_n}(x^n).
\end{align}
If $k\in V_n\backslash \partial V_n$, by the homogeneity and finite range interaction imposed in Assumption (A$1$), without loss of generality suppose $\mathcal{V}=\{0, v^1, \ldots, v^m\}$, and then we have
\begin{align}
	D_{x_k}H_{V_n}(x^n)=&D_{x_0}h_k(x_k,x_{k+v^1},\ldots,x_{k+v^m})+\cdots\nonumber\\
	&+D_{x_{v^m}}h_{k-v^m}(x_{k-v^m},x_{k+v^1-v^m},\ldots,x_{k})\nonumber\\
	=&\sum_{i\in \mathcal{V}}D_{x_i}h_{k-i}(x)\nonumber\\
	=&\left(\sum_{i\in \mathcal{V}}D_{x_i}h_{-i}\right)\circ\oplus_k(x)\nonumber\\
	=&D_{x_0}H\circ\oplus_k(x)\nonumber\\
	=&D_{x_k}H(x).\label{eqn:density_finite_range_trans}
\end{align}
Then by the boundary growth condition imposed in Assumption (A$2$), we have
$$\frac{\tau}{\sqrt{n}}\sum_{k\in V_n} D_{x_k} H_{V_n}(x^n)\simeq\frac{\tau}{\sqrt{n}}\sum_{k\in V_n} D_{x_k}H(x),$$
where $a_n\simeq b_n$ means that $\lim_{n\rightarrow \infty}|a_n-b_n|=0$. 
Recall that each $\widetilde{R}_k$  for $k\in \{N+1,\ldots,n\}$ is independent to the others, and has zero mean and unit variance, where $N$ is a fixed integer.
Next, by Assumption (A$1$) and Hypothesis (H$2$) together of which allows us to apply  Theorem \ref{thm:ergodic_spatial} (the Ergodic theorem), we have that, $\pi$ a.e. $x$,
\begin{align}
	\label{eqn:lemma_ergodic}
	\frac{1}{n}\sum_{k\in V_n}|D_{x_k}H(x)|^2\rightarrow s^2(\pi).
\end{align}
Through a standard central limit theorem argument, we have
$$\mathcal{L}\left(\frac{\tau}{\sqrt{n}}\sum_{k\in V_n} D_{x_k} H_{V_n}(x^n)\widetilde{R}_k\right)
\rightarrow \mathcal{N}(0,\tau^2 s^2(\pi)),$$
which completes the proof of $(1)$.
\medskip

\noindent \textbf{Proof of $(2)$.} By the regularity conditions imposed in Assumptions (A$1$) and (A$2$), the monotonicity and continuity of the exponential function, and the independence of $\{\widetilde{R}_k\}_{k\in \{N+1,\ldots,n\}}$ where each has a finite fourth moment, we obtain
\begin{align}
	\label{eqn:second_moment}
	\mathbb{E}|\mathcal{T}_{n1}|^2&=\mathbb{E}\Bigg\{\big[J_n(x^n+\tau/\sqrt{n} \widetilde{R}^n)/J_n(x^n)-1\big]-\frac{\tau}{\sqrt{n}}\sum_{i\in V_n} \widetilde{R}_i\frac{D_{x_i} J_n(x^n)}{J_n(x^n)}\nonumber\\
	&\hspace{4cm}\quad-\frac{\tau^2}{2n} \sum_{i,j\in V_n} \widetilde{R}_i \widetilde{R}_j\frac{D_{x_ix_j} J_n(x^n)}{J_n(x^n)}\Bigg\}^2\longrightarrow 0.
\end{align}
Hence, one can show that $\mathcal{T}_{n1}\rightarrow 0$ in probability by Chebyshev's inequality.
\smallskip

\noindent \textbf{Proof of $(3)$.} To prove $\mathcal{T}_{n2}\rightarrow 0$ in probability, by Markov's inequality it suffices to show that $$\mathbb{E}|\mathcal{T}_{n2}|=\mathbb{E}|\mathcal{A}_{n2}^2-\mathcal{B}_{n2}^2|\rightarrow 0,$$
where
$$\mathcal{A}_{n2}:=\big[J_n(x^n+\tau/\sqrt{n} \widetilde{R}^n)/J_n(x^n)-1\big]$$
and
$$\mathcal{B}_{n2}:=\frac{\tau}{\sqrt{n}}\sum_{i\in V_n} \widetilde{R}_i\frac{D_{x_i} J_n(x^n)}{J_n(x^n)}.$$
Note that
\begin{align*}
	\mathbb{E}|\mathcal{A}_{n2}^2-\mathcal{B}_{n2}^2|=&\mathbb{E}|\mathcal{A}_{n2}-\mathcal{B}_{n2}||\mathcal{A}_{n2}+\mathcal{B}_{n2}|\\
	\leq &\Big[\mathbb{E}|\mathcal{A}_{n2}-\mathcal{B}_{n2}|^2\Big]^{1/2}\Big[\mathbb{E}|\mathcal{A}_{n2}+\mathcal{B}_{n2}|^2\Big]^{1/2}.
\end{align*}
We have $$\mathbb{E}|\mathcal{A}_{n2}-\mathcal{B}_{n2}|^2\rightarrow 0,$$
by the the same reasons used in establishing \eqref{eqn:second_moment}. Then as long as we can show that 
$$\mathbb{E}|\mathcal{A}_{n2}+\mathcal{B}_{n2}|^2<\infty,$$
we would complete the proof. To this end, given that
$$\mathbb{E}|\mathcal{A}_{n2}+\mathcal{B}_{n2}|^2\leq 2\mathbb{E}|\mathcal{A}_{n2}|^2+2\mathbb{E}|\mathcal{B}_{n2}|^2,$$  
both of which are finite by the the same reasons used in establishing \eqref{eqn:second_moment}.
\medskip

\noindent \textbf{Proof of $(4)$.} 
To prove $\mathcal{T}_{n3}\rightarrow 0$ in probability, since $s^2(\pi)$ is a constant, it suffices to show that 
\begin{align} 
	\label{eqn:proof4_suffice1}
	\frac{1}{n} \sum_{i,j\in V_n} \widetilde{R}_i \widetilde{R}_j\frac{D_{x_ix_j} J_n(x^n)}{J_n(x^n)} -\frac{1}{n}\left(\sum_{i\in V_n} \widetilde{R}_i\frac{D_{x_i} J_n(x^n)}{J_n(x^n)}\right)^2\rightarrow s^2(\pi)
\end{align}
in distribution.  Recalling that $J_n=e^{H_{V_n}}$ and
$$D_{x_i} J_n(x^n)=J_n(x^n)D_{x_i} H_{V_n}(x^n),$$
we have
$$D_{x_ix_j} J_n(x^n)=J_n(x^n)D_{x_i} H_{V_n}(x^n)D_{x_j} H_{V_n}(x^n)+J_n(x^n)D_{x_ix_j} H_{V_n}(x^n).$$
Given that
\begin{align*} 
	\frac{1}{n}\left(\sum_{i\in V_n} \widetilde{R}_i\frac{D_{x_i} J_n(x^n)}{J_n(x^n)}\right)^2=&\frac{1}{n}\sum_{i,j\in V_n} \widetilde{R}_i \widetilde{R}_j\frac{D_{x_i} J_n(x^n)}{J_n(x^n)} \frac{D_{x_j} J_n(x^n)}{J_n(x^n)}\\
	=&\frac{1}{n}\sum_{i,j\in V_n} \widetilde{R}_i \widetilde{R}_jD_{x_i} H_{V_n}(x^n)D_{x_j} H_{V_n}(x^n)
\end{align*}
and
\begin{align*} 
	&\frac{1}{n} \sum_{i,j\in V_n} \widetilde{R}_i \widetilde{R}_j\frac{D_{x_ix_j} J_n(x^n)}{J_n(x^n)}\\
	&=\frac{1}{n} \sum_{i,j\in V_n} \widetilde{R}_i \widetilde{R}_j D_{x_i} H_{V_n}(x^n)D_{x_j} H_{V_n}(x^n)+\frac{1}{n} \sum_{i,j\in V_n} \widetilde{R}_i \widetilde{R}_j D_{x_ix_j} H_{V_n}(x^n),
\end{align*}
we have
\begin{align*} 
	&\frac{1}{n} \sum_{i,j\in V_n} \widetilde{R}_i \widetilde{R}_j\frac{D_{x_ix_j} J_n(x^n)}{J_n(x^n)} -\frac{1}{n}\left(\sum_{i\in V_n} \widetilde{R}_i\frac{D_{x_i} J_n(x^n)}{J_n(x^n)}\right)^2\\
	&=\frac{1}{n} \sum_{i,j\in V_n} \widetilde{R}_i \widetilde{R}_j D_{x_ix_j} H_{V_n}(x^n).
\end{align*}
Hence, 
by \eqref{eqn:proof4_suffice1}, to finish the proof that $\mathcal{T}_{n3}\rightarrow 0$ in probability, it suffices to show that $\pi$ a.e.,
\begin{align*} 
	\frac{1}{n} \sum_{i,j\in V_n} \widetilde{R}_i \widetilde{R}_j D_{x_ix_j} H_{V_n}(x^n)\rightarrow s^2(\pi).
\end{align*}
This holds by Lemma $3$ (on page $195$) and Lemma $5$ (on page $196$) of \cite{Breyer2000From}, where the condition of bounded second-order derivatives is used but that is our Assumption (A$2$).
\medskip

\noindent \textbf{Proof of $(5)$.} 
Since $|\alpha_n|\leq 1$ and by Markov's inequality, we have 
\begin{align*}
	\mathbb{P}\left(\alpha_n\left|\frac{J_n(x^n+\tau/\sqrt{n} \widetilde{R}^n)}{J_n(x^n)}-1\right|^3\geq \epsilon\right)
	&\leq  \mathbb{P}\left(\left|\frac{J_n(x^n+\tau/\sqrt{n} \widetilde{R}^n)}{J_n(x^n)}-1\right|^3\geq \epsilon\right)\\
	&\leq  \epsilon^{-1}\mathbb{E}\left|\frac{J_n(x^n+\tau/\sqrt{n} \widetilde{R}^n)-J_n(x^n)}{J_n(x^n)}\right|^3,
\end{align*}
which goes to zero by the the same reasons used in establishing \eqref{eqn:second_moment}.

\subsubsection{Corollary}
\label{sec:Proofsa_corollary}

The following corollary is derived from the proof of Theorem \ref{thm:a_convergence} and will be utilized in the proof of Condition (M$1$) in the subsequent section.
\begin{corollary} \label{corollary}
	Under the assumptions imposed in Theorem \ref{thm:a_convergence}, with $N$ being a fixed positive integer, for almost every $r^N$ and $x$, as $n\rightarrow\infty$,
\begin{align*}
	&\frac{\frac{\psi_n(x^n-\tau n^{-1/2}\widetilde{R}^n)}{\psi_n(x^N+\tau n^{-1/2}r^N)}}{\frac{\psi_n(x^n)}{\psi_n(x^N)}} \Longrightarrow\exp\left(-\tau s(\pi)Z-(1/2)\tau^2 s^2(\pi)\right),\\ 
	&\frac{\frac{\psi_n(x^n-\tau n^{-1/2}\widetilde{R}^n)}{\psi_n(x^N-\tau n^{-1/2}r^N)}}{\frac{\psi_n(x^n)}{\psi_n(x^N)}} \Longrightarrow\exp\left(-\tau s(\pi)Z-(1/2)\tau^2 s^2(\pi)\right),
\end{align*}	
where $\widetilde{R}^n$ is defined in \eqref{eqn:widetildeR}, $Z$ is a standard normal random variable, and $s(\pi)$ is given in \eqref{eqn:s_pi_def}.
\end{corollary}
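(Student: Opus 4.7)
The plan is to reduce Corollary~\ref{corollary} to Lemma~\ref{thm:a_convergence} (indeed, to its core intermediate step \eqref{eqn:cvg_Hamiltonian2}) plus an asymptotic triviality argument for the perturbation on the first $N$ fixed coordinates. For the first convergence, I would factor the ratio as
$$\frac{\psi_n(x^n-\tau n^{-1/2}\widetilde{R}^n)}{\psi_n(x^n)}\cdot\frac{\psi_n(x^n)}{\psi_n(x^N+\tau n^{-1/2}r^N)}=:A_n\cdot B_n,$$
interpreting $\psi_n(x^N\pm\tau n^{-1/2}r^N)$ as $\psi_n$ evaluated at the $n$-vector whose first $N$ coordinates are $x^N\pm\tau n^{-1/2}r^N$ and whose last $n-N$ coordinates are $x^{n-N}$ (so in particular $\psi_n(x^N)=\psi_n(x^n)$ and the two factors above indeed multiply back to the ratio in the statement).

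For $A_n$, the symmetry of the proposal density $\phi$ implies that the conditional law of $\widetilde R^n$ given $r^N$ is invariant under $\widetilde R^n\mapsto -\widetilde R^n$, so the proof of Lemma~\ref{thm:a_convergence}---specifically the weak convergence \eqref{eqn:cvg_Hamiltonian2} of the Hamiltonian increment---applies verbatim with this sign flip, giving
$$A_n\;\Longrightarrow\;\exp\bigl(-\tau s(\pi)Z-\tfrac12\tau^2 s^2(\pi)\bigr).$$
For $B_n$, writing $\psi_n=Z_{V_n}^{-1}e^{-H_{V_n}}$ and applying a second-order Taylor expansion yields
$$\log B_n=\frac{\tau}{\sqrt n}\sum_{i=1}^N r_i\,D_{x_i}H_{V_n}(x^n)+\frac{\tau^2}{2n}\sum_{i,j=1}^N r_i r_j\,D_{x_ix_j}H_{V_n}(\xi_n)$$
for some $\xi_n$ between $x^n$ and the perturbed point. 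By Assumption~(A$1$) (finite-range interactions) combined with the computation \eqref{eqn:density_finite_range_trans}, for $n$ large enough one has $D_{x_i}H_{V_n}(x^n)=D_{x_i}H(x)$ for each fixed $i\le N$, so the first-order sum is $O(1)$ uniformly in $n$; the second-order term is $O(1/n)$ by Assumption~(A$2$). Hence $\log B_n=O(n^{-1/2})\to 0$ for $\pi$-a.e.\ $x$ and a.e.\ $r^N$, so $B_n\to 1$ deterministically (since $B_n$ does not depend on the randomness $R_{N+1},\ldots,R_n$).

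Slutsky's theorem then yields $A_nB_n\Longrightarrow\exp(-\tau s(\pi)Z-\tfrac12\tau^2 s^2(\pi))$, giving the first identity. The second identity is handled identically with $B_n$ replaced by $\psi_n(x^n)/\psi_n(x^N-\tau n^{-1/2}r^N,x^{n-N})$; its Taylor expansion differs from the one above only by a sign on the first-order term, and the same $O(n^{-1/2})$ estimate shows the new correction factor again tends to $1$. The main obstacle is the uniform-in-$n$ control of the first-order term in $\log B_n$: the prefactor $\tau/\sqrt n$ is borderline, so one genuinely needs the stabilization $D_{x_i}H_{V_n}(x^n)=D_{x_i}H(x)$ that finite-range interactions furnish through \eqref{eqn:density_finite_range_trans}. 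Without that structure (e.g.\ for generic long-range interactions), the first-order sum could a priori grow like $\sqrt n$ and the correction factor would fail to converge to $1$.
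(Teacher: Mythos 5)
Your proof is correct and is essentially the paper's own argument: the paper likewise reduces both limits to the sign-flipped version of \eqref{eqn:cvg_Hamiltonian2} for the bulk factor and shows that the perturbation of the first $N$ coordinates contributes only an $O(n^{-1/2})$ Hamiltonian increment that vanishes, your multiplicative $A_n\cdot B_n$ factorization being just the exponential of the paper's additive decomposition (and your explicit appeal to \eqref{eqn:density_finite_range_trans} to control $D_{x_i}H_{V_n}(x^n)$ uniformly in $n$ is a welcome bit of extra care). One cosmetic nit: the conditional law of $\widetilde R^n$ given $r^N$ is not literally invariant under the full sign flip, since the first $N$ coordinates are pinned at $r^N$; only the random coordinates $R_{N+1},\ldots,R_n$ are symmetric, but that is all the argument needs and is exactly what the paper invokes.
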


\begin{proof}
	Note that
\begin{align*}
	&\frac{\psi_n(x^n-\tau n^{-1/2}\widetilde{R}^n)\psi_n(x^N)}{\psi_n(x^N\pm \tau n^{-1/2}r^N)\psi_n(x^n)}\\
	&=\exp\left( -H_{V_n}(x^n-\tau/\sqrt{n} \widetilde{R}^n)+H_{V_n}(x^n)-H_{V_n}(x^N)+H_{V_n}(x^N\pm\tau/\sqrt{n} r^N)\right).
\end{align*}
In \eqref{eqn:cvg_Hamiltonian2}, we have that for almost every $r^N$ and $x$,
	\begin{align*}
	H_{V_n}(x^n+\tau/\sqrt{n} \widetilde{R}^n)-H_{V_n}(x^n)\Longrightarrow \tau s(\pi) Z+\tau^2s^2(\pi)/2.
\end{align*}
Since $\widetilde{R}^k$ has symmetric probability distribution for each $k\in \{N+1,\ldots, n\}$, slight modification of the proof of  \eqref{eqn:cvg_Hamiltonian2} would give
	\begin{align*}
	H_{V_n}(x^n-\tau/\sqrt{n} \widetilde{R}^n)-H_{V_n}(x^n)\Longrightarrow \tau s(\pi) Z+\tau^2s^2(\pi)/2.
\end{align*}
Next, 
	\begin{align*}
	H_{V_n}(x^N\pm \tau/\sqrt{n} r^N)-H_{V_n}(x^N)=\pm \frac{\tau}{\sqrt{n}}\sum_{i\in \{1,\ldots,N\}} \alpha r_i D_{x_i} H_{V_n}(x^N),
\end{align*}
where $|\alpha|\leq 1$. Since $N$ is fixed, by the assumed regularity conditions of $H_{V_n}$, we have
	\begin{align*}
	H_{V_n}(x^N\pm \tau/\sqrt{n} r^N)-H_{V_n}(x^N)\rightarrow 0.
\end{align*}
Considering that $e^{-x}$ is a monotone function, we complete the proof.


\end{proof}

\subsection{Proof of Condition (M$1$)}
\label{sec:Proof_M1}

In this section, we present the proof of Condition (M$1$). Before presenting the formal proof in Section \ref{sec:ProofsM1}, we outline the proof strategy in a more accessible manner in Section \ref{sec:ProofsM1_sketch} to enhance understanding.

\subsubsection{Sketch of the proof of Condition (M$1$)}
\label{sec:ProofsM1_sketch}

As the first work that introduced Mosco convergence to the optimal scaling problem, \cite{zanella2017dirichlet} creatively proposed a proof structure by separating the function convergence ($f_n$ to $f$) and the argument convergence ($X^n+\tau n^{-1/2}R^n$ to $X$) as $n$ goes to infinity. However, our proof strategy diverges from that of \cite{zanella2017dirichlet} due to two crucial challenges. Firstly, our definition of Mosco convergence differs from that of \cite{zanella2017dirichlet}, as we consider the sequence of Dirichlet forms on $\mathcal{H}_n= L^2(\mathbb{R}^{V_n};\pi_n)$ converging to a Dirichlet form on $\mathcal{H}= L^2(\mathbb{R}^V;\pi)$, whereas \cite{zanella2017dirichlet} always deals with sequences of Dirichlet forms in the infinite-dimensional space. Secondly, in this paper, we consider the target distribution in a general form with a large graph $V$, while \cite{zanella2017dirichlet} focused on the target distribution in the product form (our special case).


Condition (M$1$) in Definition \ref{def:Mosco_converges} states that if a sequence $\{f_n\}$ with $f_n\in \mathcal{H}_n$ converges to $f\in \mathcal{H}$ weakly in the sense of Definition \ref{def:convergences}, then $$\mathcal{E}(f)\leq\liminf_{n\rightarrow \infty}\mathcal{E}^n(f_n).$$ Our proof proceeds in five steps. In the first step, following \cite{zanella2017dirichlet}, we utilize the Cauchy-Schwarz inequality and the definition of the $L^2$ norm to place $\sqrt{\sE^n(f_n)}$ appropriately in the inequality \eqref{eqn:Cauchy-Schwarz_ratio}.  That is, as long as we can show that the fraction on which $\sqrt{\sE^n(f_n)}$ is larger than or equal to, converges to $\sqrt{\sE(f)}$, the mission to prove Condition (M$1$) would be accomplished. Handling the denominator is straightforward with the aid of Theorem \ref{thm:a_convergence}, while the numerator presents challenges addressed in Steps 2-4. Unlike most convergence analyses with fixed arguments for Dirichlet forms, the term $\sqrt{n}\Big[f_n(x^n+\tau n^{-1/2}r^n)-f_n(x^n)\Big]$ involves not only the convergence of $f_n$ to $f$ but also the convergence of $(x^n+\tau n^{-1/2}r^n)$ to $x$. Without special regularity conditions for $f_n$, directly obtaining the desired gradient $\nabla f(x)$ is unfeasible. Unfortunately, we lack these special regularity conditions. Therefore, in Step 2, through a change of variable argument, we transform $f_n(x^n+\tau n^{-1/2}r^n)$ into $f_n(x^n)$ and subsequently restructure the numerator into the sum of $\mathcal{A}_1$ and $\mathcal{A}_2$.

Our approach diverges from that of \cite{zanella2017dirichlet} from Step $2$.  In \cite{zanella2017dirichlet}, given that  $\psi_n$ is in the product form, they utilized
$$\frac{\psi_n(x^n-\tau n^{-1/2}r^n)}{\psi_n(x^N-\tau n^{-1/2}r^N)}=\psi_n(x^{n-N}-\tau n^{-1/2}r^{n-N})$$
and then transformed it into  $\psi_n(x^{n-N}+\tau n^{-1/2}r^{n-N})$ using reflection as the first two substeps in Step $2$. Unfortunately, with a general formed $\psi_n$, neither of these two substeps is applicable, and this divergence extends to the term  $\mathcal{A}_1$. Showing that  $\mathcal{A}_1\rightarrow 0$ as $n \rightarrow \infty$ is the objective of Step 3. To achieve this, we introduce an intermediate quantity to separate the crucial difference part in  $\mathcal{A}_1$
$$\Bigg\{\Bigg[\frac{\psi_n(x^N)}{\psi_n(x^N-\tau n^{-1/2}r^N)}\wedge \frac{\frac{\psi_n(x^n-\tau n^{-1/2}r^n)}{\psi_n(x^N-\tau n^{-1/2}r^N)}}{\frac{\psi_n(x^n)}{\psi_n(x^N)}}\Bigg] \\
-a_n(x^n, x^n+\tau n^{-1/2}r^n)\Bigg\},$$
by the triangle inequality. We then handle the easier term in  Step $3$. a), employing properties of the minimum function and the exponential function. The more challenging term is addressed in Step $3$. b), involving a meticulous analysis of two sets $\mathcal{S}_1$ and $\mathcal{S}_2$ defined after employing reflection. To control the corresponding terms, we flexibly use the following properties on the set $\mathcal{S}_2$ 
\begin{align*}
\Bigg\{ 1\leq \frac{\psi_n(x^n-\tau n^{-1/2}r^n)}{\psi_n(x^n)}\Bigg\}
	=&\Bigg\{ \frac{\psi_n(x^N)}{\psi_n(x^N-\tau n^{-1/2}r^N)}\leq \frac{\frac{\psi_n(x^n-\tau n^{-1/2}r^n)}{\psi_n(x^N-\tau n^{-1/2}r^N)}}{\frac{\psi_n(x^n)}{\psi_n(x^N)}}\Bigg\}\\
	=&\Bigg\{ \frac{\psi_n(x^N)}{\psi_n(x^N+\tau n^{-1/2}r^N)}\leq \frac{\frac{\psi_n(x^n-\tau n^{-1/2}r^n)}{\psi_n(x^N+\tau n^{-1/2}r^N)}}{\frac{\psi_n(x^n)}{\psi_n(x^N)}}\Bigg\},
\end{align*}
and then use Corollary \ref{corollary} to handle the above quantities.

After establishing $\mathcal{A}_1\rightarrow 0$, in Step 4, we demonstrate that
\begin{align*}
	\mathcal{A}_2 \longrightarrow -\dfrac{\tau c(\tau)}{\sqrt{2}}\mathbb{E}\left(f(X) \left[\nabla_{x^N} \Big(\xi(X^N,R^N)\psi(X^N)\Big)\right]^T R^N \psi^{-1}(X^N)\right).
\end{align*}
This is obtained by the imposed regularity of $\psi_n$, Theorem \ref{thm:a_convergence}, the smoothness of $\xi$, the compact support on $K$, and importantly Lemma \ref{useful_cvg_equi} regarding the weak convergence of $f_n\in \mathcal{H}_n$ to $f\in \mathcal{H}$ in the sense of Definition \ref{def:convergences}. 
In Step $5$, we combine all the results and show that the fraction obtained in Step $1$ converges to $\sqrt{\sE(f)}$, that is, $\dfrac{\tau \sqrt{c(\tau)}}{\sqrt{2}}\sqrt{\mathbb{E}|\nabla f(X)|^2}$. The gradient form $\nabla f(x)$ is obtained by an integration-by-parts argument using the compact support with the gradient term $\nabla_{x^N} \Big(\xi(X^N,R^N)\psi(X^N)\Big)$ in the above equation, for $f\in S$ where $S$ is given in \eqref{def:S}. For the other case where $f\in \mathcal{H}\backslash S$ such that $\sE(f)=\infty$, \cite{zanella2017dirichlet} beautifully solved it regardless of the specific forms of the target distribution, so we refer to the proof therein.

\subsubsection{Formal proof of Condition (M$1$)}
\label{sec:ProofsM1}


We start by defining a proper test function such that the dimension of its arguments does not grow with $n$.
Fix a positive integer $N$ and choose a non-zero test function $\xi$ in $C_0^{\infty}(\mathbb{R}^{2N})$, which is the class of infinitely differentiable functions with compact support on $\mathbb{R}^{2N}$. We further fix a compact set $K\subset \mathbb{R}^{2N}$ such that $\cup_{n\in \mathbb{N}}\{(x^N,r^N): \xi(x^N-\tau n^{-1/2}r^N,r^N)>0\}\subseteq K$ (see page $4069$ of \cite{zanella2017dirichlet} for an example of $K$). Note that the function $\xi(X^N,R^N)\mathbbm{1}_{\{U<a_n(X^n, X^n+\tau n^{-1/2}R^n)\}}$ belongs to $L^2_{(X,R,U)}$ and is non-zero. We are going to prove Condition (M$1$) through the following $5$ steps.
\medskip

\noindent\textbf{Step $1$.} In this step, using the test function $\xi$, we establish the proof structure by placing $\mathcal{E}^n: \mathcal{H}_n\rightarrow \overline{\mathbb{R}}$ in the desired inequality form in \eqref{eqn:Cauchy-Schwarz_ratio} below.
Recall that the Dirichlet form $\sE^n(f_n)$ defined in \eqref{En} is given by
\begin{align*}
	\sE^n(f_n)=\dfrac{n}{2}\mathbb{E}\Big[f_n(X^n(1))-f_n(X^n(0))\Big]^2.
\end{align*}
Now, set 
$$\Psi_n(f_n)=\sqrt{\dfrac{n}{2}}\Big[f_n(X^n(1))-f_n(X^n(0))\Big].$$ 
Applying the Cauchy-Schwarz inequality, we obtain
\begin{align}\label{eqn:Cauchy-Schwarz_ratio}
	\sqrt{\sE^n(f_n)}=&\left\| \Psi_n(f_n)\right\|_{L^2_{(X,R,U)}}\nonumber\\
	\geq & \frac{\left\langle\Psi_n(f_n), \xi(X^N,R^N)\mathbbm{1}_{\{U<a_n(X^n, X^n+\tau n^{-1/2}R^n)\}}\right\rangle_{L^2_{(X,R,U)}}}{\left\|  \xi(X^N,R^N)\mathbbm{1}_{\{U<a_n(X^n, X^n+\tau n^{-1/2}R^n)\}} \right\|_{L^2_{(X,R,U)}}}.
\end{align}
Here, $U$ is the Uniform $(0,1)$ random variable that is independent of $X$ and $R$. Then intergating out $U$ yields,
\begin{align*}
&\left\|  \xi(X^N,R^N)\mathbbm{1}_{\{U<a_n(X^n, X^n+\tau n^{-1/2}R^n)\}} \right\|_{L^2_{(X,R,U)}}\\
&= \sqrt{\mathbb{E}\Big[ \xi^2(X^N,R^N) a_n(X^n, X^n+\tau n^{-1/2}R^n)\Big]}\\
&= \sqrt{\mathbb{E}\Big[ \xi^2(X^N,R^N)\mathbb{E}\big[a_n(X^n, X^n+\tau n^{-1/2}R^n)\mid X^N,R^N\big] \Big] }.
\end{align*}
Thus, by Theorem \ref{thm:a_convergence}, the denominator in \eqref{eqn:Cauchy-Schwarz_ratio} can be handled as follows:
\begin{align}
	\label{eqn:denominator}
	&\left\|  \xi(X^N,R^N)\mathbbm{1}_{\{U<a_n(X^n, X^n+\tau n^{-1/2}R^n)\}} \right\|_{L^2_{(X,R,U)}}\nonumber\\
	&\hspace{3cm}\longrightarrow \sqrt{c(\tau)}\|  \xi(X^N,R^N) \|_{L^2_{(X,R)}}.
\end{align}
The numerator in \eqref{eqn:Cauchy-Schwarz_ratio}, after intergating out $U$, becomes
\begin{align*}
	&\sqrt{\dfrac{n}{2}}\int_{\mathbb{R}^{V_n}\times \mathbb{R}^{V_n}} \Big[f_n(x^n+\tau n^{-1/2}r^n)-f_n(x^n)\Big] a_n(x^n, x^n+\tau n^{-1/2}r^n) \\
	&\hspace{5.8cm}\times \xi(x^N,r^N)\varphi^{n}(r^n)dr^n\psi_n(x^n)dx^n,
\end{align*}
which will be handled in Steps $2-4$. 
\medskip

\noindent\textbf{Step $2$.} In this step, we transform $f_n(x^n+\tau n^{-1/2}r^n)$ into $f_n(x^n)$ using a change of variable argument to reformulate the numerator in \eqref{eqn:Cauchy-Schwarz_ratio}, as analyzing the asymptotic behavior of $\sqrt{n}\Big[f_n(x^n+\tau n^{-1/2}r^n)-f_n(x^n)\Big]$ with both changing functions and changing arguments is challenging.
We express the numerator in \eqref{eqn:Cauchy-Schwarz_ratio} as
\begin{align*}
&\sqrt{\dfrac{n}{2}}\int_{\mathbb{R}^{V_n}\times \mathbb{R}^{V_n}} \Big[f_n(x^n+\tau n^{-1/2}r^n)-f_n(x^n)\Big] a_n(x^n, x^n+\tau n^{-1/2}r^n) \\
	&\hspace{5cm}\times \xi(x^N,r^N)\varphi^{n}(r^n)dr^n\psi_n(x^n)dx^n\\
&=\sqrt{\dfrac{n}{2}}\int_{\mathbb{R}^{V_n}\times \mathbb{R}^{V_n}}f_n(x^n) a_n(x^n-\tau n^{-1/2}r^n, x^n)\xi(x^N-\tau n^{-1/2}r^N,r^N) \\
&\hspace{5cm}\times \varphi^{n}(r^n)dr^n\psi_n(x^n-\tau n^{-1/2}r^n)dx^n\\	
&\quad -\sqrt{\dfrac{n}{2}}\int_{\mathbb{R}^{V_n}\times \mathbb{R}^{V_n}} f_n(x^n) a_n(x^n, x^n+\tau n^{-1/2}r^n)\xi(x^N,r^N)\varphi^{n}(r^n)dr^n\\
&\hspace{5cm}\times \psi_n(x^n)dx^n.
\end{align*}
For the first term of the above equation, we have
\begin{align*}
&\sqrt{\dfrac{n}{2}}\int_{\mathbb{R}^{V_n}\times \mathbb{R}^{V_n}}f_n(x^n) a_n(x^n-\tau n^{-1/2}r^n, x^n) \xi(x^N-\tau n^{-1/2}r^N,r^N)\\
	&\hspace{4cm}\times \varphi^{n}(r^n)dr^n\psi_n(x^n-\tau n^{-1/2}r^n)dx^n\\	
&=\sqrt{\dfrac{n}{2}}\int_{\mathbb{R}^{V_n}\times \mathbb{R}^{V_n}}f_n(x^n) \Bigg[1\wedge \frac{\psi_n(x^n)}{\psi_n(x^n-\tau n^{-1/2}r^n)}\Bigg] \xi(x^N-\tau n^{-1/2}r^N,r^N)\\
&\hspace{4cm}\times \varphi^{n}(r^n)dr^n\psi_n(x^n-\tau n^{-1/2}r^n)dx^n\\	
&=\sqrt{\dfrac{n}{2}}\int_{\mathbb{R}^{V_n}\times \mathbb{R}^{V_n}}f_n(x^n) \Bigg[\psi_n(x^n)\wedge \psi_n(x^n-\tau n^{-1/2}r^n)\Bigg]\\
&\hspace{4cm}\times \xi(x^N-\tau n^{-1/2}r^N,r^N) \varphi^{n}(r^n)dr^ndx^n\\	
&=\sqrt{\dfrac{n}{2}}\int_{\mathbb{R}^{V_n}\times \mathbb{R}^{V_n}}f_n(x^n) \Bigg[1\wedge \frac{\psi_n(x^n-\tau n^{-1/2}r^n)}{\psi_n(x^n)}\Bigg] \xi(x^N-\tau n^{-1/2}r^N,r^N)\\
&\hspace{4cm}\times \varphi^{n}(r^n)dr^n\psi_n(x^n)dx^n\\	
&=\sqrt{\dfrac{n}{2}}\int_{\mathbb{R}^{V_n}\times \mathbb{R}^{V_n}}f_n(x^n) \Bigg[\frac{\psi_n(x^N)}{\psi_n(x^N-\tau n^{-1/2}r^N)}\wedge \frac{\frac{\psi_n(x^n-\tau n^{-1/2}r^n)}{\psi_n(x^N-\tau n^{-1/2}r^N)}}{\frac{\psi_n(x^n)}{\psi_n(x^N)}}\Bigg] \\
&\hspace{0.85cm}\times \frac{\psi_n(x^N-\tau n^{-1/2}r^N)}{\psi_n(x^N)}\xi(x^N-\tau n^{-1/2}r^N,r^N)\varphi^{n}(r^n)dr^n\psi_n(x^n)dx^n,
\end{align*}
where in the final step, we isolated the term $\frac{\psi_n(x^N-\tau n^{-1/2}r^N)}{\psi_n(x^N)}$, which will assist in bounding the $\sqrt{n}$ factor later.

Up to this point, we can represent the numerator of \eqref{eqn:Cauchy-Schwarz_ratio} as follows:
\begin{align}\label{eqn:terms_in_numerator}
	&\sqrt{\dfrac{n}{2}}\int_{\mathbb{R}^{V_n}\times \mathbb{R}^{V_n}} \Big[f_n(x^n+\tau n^{-1/2}r^n)-f_n(x^n)\Big] a_n(x^n, x^n+\tau n^{-1/2}r^n) \xi(x^N,r^N)\nonumber\\
	&\hspace{7.8cm}\times \varphi^{n}(r^n)dr^n\psi_n(x^n)dx^n\nonumber\\
	&=\sqrt{\dfrac{n}{2}}\int_{\mathbb{R}^{V_n}\times \mathbb{R}^{V_n}}f_n(x^n) \Bigg\{\Bigg[\frac{\psi_n(x^N)}{\psi_n(x^N-\tau n^{-1/2}r^N)}\wedge \frac{\frac{\psi_n(x^n-\tau n^{-1/2}r^n)}{\psi_n(x^N-\tau n^{-1/2}r^N)}}{\frac{\psi_n(x^n)}{\psi_n(x^N)}}\Bigg]\nonumber\\
	&\hspace{4.6cm}\times \frac{\psi_n(x^N-\tau n^{-1/2}r^N)}{\psi_n(x^N)}\xi(x^N-\tau n^{-1/2}r^N,r^N)\nonumber\\
	&\hspace{4 cm}-a_n(x^n, x^n+\tau n^{-1/2}r^n)\xi(x^N,r^N)\Bigg\}\nonumber\\
	&\hspace{7cm}\times \varphi^{n}(r^n)dr^n\psi_n(x^n)dx^n\nonumber\\	
	&=:\mathcal{A}_1+\mathcal{A}_2,
\end{align}
where 
\begin{align*}
	\mathcal{A}_1=&\dfrac{\tau}{\sqrt{2}}\frac{\sqrt{n}}{\tau}\int_{\mathbb{R}^{V_n}\times \mathbb{R}^{V_n}}f_n(x^n) \Bigg\{\Bigg[\frac{\psi_n(x^N)}{\psi_n(x^N-\tau n^{-1/2}r^N)}\wedge \frac{\frac{\psi_n(x^n-\tau n^{-1/2}r^n)}{\psi_n(x^N-\tau n^{-1/2}r^N)}}{\frac{\psi_n(x^n)}{\psi_n(x^N)}}\Bigg] \\
	&\hspace{3.3cm}-a_n(x^n, x^n+\tau n^{-1/2}r^n)\Bigg\}\frac{\psi_n(x^N-\tau n^{-1/2}r^N)}{\psi_n(x^N)}\\
	&\hspace{2.2cm}\times \xi(x^N-\tau n^{-1/2}r^N,r^N)\varphi^{n}(r^n)dr^n\psi_n(x^n)dx^n
\end{align*}
and
\begin{align*}
	\mathcal{A}_2=&\sqrt{\dfrac{n}{2}}\int_{\mathbb{R}^{V_n}\times \mathbb{R}^{V_n}}f_n(x^n)a_n(x^n, x^n+\tau n^{-1/2}r^n)\\
	&\hspace{1cm}\times \Bigg\{\xi(x^N-\tau n^{-1/2}r^N,r^N)\frac{\psi_n(x^N-\tau n^{-1/2}r^N)}{\psi_n(x^N)} -\xi(x^N,r^N)\Bigg\}\\
	&\hspace{6cm}\times \varphi^{n}(r^n)dr^n \psi_n(x^n)dx^n.
\end{align*}
\smallskip

\noindent\textbf{Step $3$.} In this step, we aim to show that $\mathcal{A}_1\rightarrow 0$ as $n \rightarrow \infty$.
By Lemma \ref{useful_cvg_equi}, we have that $\sup_n\|f_n\|_{\mathcal{H}_n}<\infty$. Additionally, given the continuity of the functions $\xi$ and $\psi_n$, we can assert on the compact set $K$ that
$$\xi(x^N-\tau n^{-1/2}r^N,r^N)\frac{\psi_n(x^N-\tau n^{-1/2}r^N)}{\psi_n(x^N)}<\infty.$$ Hence, the following term in $\mathcal{A}_1$
$$\dfrac{\tau}{\sqrt{2}}f_n(x^n)\xi(x^N-\tau n^{-1/2}r^N,r^N)\frac{\psi_n(x^N-\tau n^{-1/2}r^N)}{\psi_n(x^N)}$$
is bounded in the $L^2_{(X,R)}$ norm.

Next, we aim to show that the remaining term of $\mathcal{A}_1$ 
\begin{align*}
\mathcal{A}_{1,2}&:=\frac{\sqrt{n}}{\tau}\int_{\mathbb{R}^{V_n}\times \mathbb{R}^{V_n}}\Bigg\{\Bigg[\frac{\psi_n(x^N)}{\psi_n(x^N-\tau n^{-1/2}r^N)}\wedge \frac{\frac{\psi_n(x^n-\tau n^{-1/2}r^n)}{\psi_n(x^N-\tau n^{-1/2}r^N)}}{\frac{\psi_n(x^n)}{\psi_n(x^N)}}\Bigg] \\
&\hspace{2.3cm}-a_n(x^n, x^n+\tau n^{-1/2}r^n)\Bigg\} \varphi^{n}(r^n)dr^n\psi_n(x^n)dx^n \\
&\longrightarrow  0.
\end{align*}
Note that
\begin{align*}
	\mathcal{A}_{1,2}=&\frac{\sqrt{n}}{\tau}\int_{\mathbb{R}^{V_n}\times \mathbb{R}^{V_n}}\Bigg\{\Bigg[\frac{\psi_n(x^N)}{\psi_n(x^N-\tau n^{-1/2}r^N)}\wedge \frac{\frac{\psi_n(x^n-\tau n^{-1/2}r^n)}{\psi_n(x^N-\tau n^{-1/2}r^N)}}{\frac{\psi_n(x^n)}{\psi_n(x^N)}}\Bigg] \\
	&\hspace{2.4cm}-1\wedge \frac{\psi_n(x^n+\tau n^{-1/2}r^n)}{\psi_n(x^n)}\Bigg\} \varphi^{n}(r^n)dr^n\psi_n(x^n)dx^n \\
	=&\frac{\sqrt{n}}{\tau}\int_{\mathbb{R}^{V_n}\times \mathbb{R}^{V_n}}\Bigg\{\Bigg[\frac{\psi_n(x^N)}{\psi_n(x^N-\tau n^{-1/2}r^N)}\wedge \frac{\frac{\psi_n(x^n-\tau n^{-1/2}r^n)}{\psi_n(x^N-\tau n^{-1/2}r^N)}}{\frac{\psi_n(x^n)}{\psi_n(x^N)}}\Bigg]\\
	&\hspace{2.4cm}-\Bigg[\frac{\psi_n(x^N+\tau n^{-1/2}r^N)}{\psi_n(x^N)}\wedge \frac{\frac{\psi_n(x^n-\tau n^{-1/2}r^n)}{\psi_n(x^N-\tau n^{-1/2}r^N)}}{\frac{\psi_n(x^n)}{\psi_n(x^N)}}\Bigg]\Bigg\}\\
	&\hspace{5cm}\times\varphi^{n}(r^n)dr^n\psi_n(x^n)dx^n \\
	&+\frac{\sqrt{n}}{\tau}\int_{\mathbb{R}^{V_n}\times \mathbb{R}^{V_n}}\Bigg\{\Bigg[\frac{\psi_n(x^N+\tau n^{-1/2}r^N)}{\psi_n(x^N)}\wedge \frac{\frac{\psi_n(x^n-\tau n^{-1/2}r^n)}{\psi_n(x^N-\tau n^{-1/2}r^N)}}{\frac{\psi_n(x^n)}{\psi_n(x^N)}}\Bigg]\\
	&\hspace{2.5cm}-1\wedge \frac{\psi_n(x^n+\tau n^{-1/2}r^n)}{\psi_n(x^n)}\Bigg\}\varphi^{n}(r^n)dr^n\psi_n(x^n)dx^n \\
	=: & \mathcal{A}_{1,2,1}+\mathcal{A}_{1,2,2},
\end{align*}
which will be handled in the following Steps $3$. a) and $3$. b) respectively.
\medskip

\noindent\textbf{Step $3$. a).} Here, we show that $\mathcal{A}_{1,2,1}\rightarrow 0$ as $n\rightarrow \infty$. By the fact that for any $a,b,c>0$ one has
$$|(a\wedge c)-(b\wedge c) |\leq |a-b|,$$
together with the fact that $e^x$ is locally Lipschitz,
the term in $\mathcal{A}_{1,2,1}$
\begin{align*}
	&\frac{\sqrt{n}}{\tau}\Bigg|\Bigg[\frac{\psi_n(x^N)}{\psi_n(x^N-\tau n^{-1/2}r^N)}\wedge \frac{\frac{\psi_n(x^n-\tau n^{-1/2}r^n)}{\psi_n(x^N-\tau n^{-1/2}r^N)}}{\frac{\psi_n(x^n)}{\psi_n(x^N)}}\Bigg]\\
	&\hspace{2.4cm}-\Bigg[\frac{\psi_n(x^N+\tau n^{-1/2}r^N)}{\psi_n(x^N)}\wedge \frac{\frac{\psi_n(x^n-\tau n^{-1/2}r^n)}{\psi_n(x^N-\tau n^{-1/2}r^N)}}{\frac{\psi_n(x^n)}{\psi_n(x^N)}}\Bigg]\Bigg|\\
	&\leq \frac{\sqrt{n}}{\tau}\Bigg|\frac{\psi_n(x^N)}{\psi_n(x^N-\tau n^{-1/2}r^N)}-\frac{\psi_n(x^N+\tau n^{-1/2}r^N)}{\psi_n(x^N)}\Bigg|\\
	&= \frac{\sqrt{n}}{\tau} \Bigg|\exp\Big(-H_n(x^N)+H_n(x^N-\tau n^{-1/2}r^N)\Big)\\
	&\hspace{2.4cm}-\exp\Big(-H_n(x^N+\tau n^{-1/2}r^N)+H_n(x^N)\Big)\Bigg|\\
	&\leq \frac{\sqrt{n}}{\tau} \Bigg|H_n(x^N+\tau n^{-1/2}r^N)+H_n(x^N-\tau n^{-1/2}r^N)-2H_n(x^N)\Bigg|\\
	&=\frac{\sqrt{n}}{\tau}\frac{\tau^2}{n} \left|\alpha\sum_{i,j\in \{1,\ldots,N\}} R_iR_j D_{x_ix_j} H_{V_n}(x^N)\right|,
\end{align*}
where $|\alpha|\leq 1$.
Since $N$ is a fixed constant, and by the uniformly bounded second-order derivative assumption, we have $\mathcal{A}_{1,2,1}\rightarrow 0$ as $n\rightarrow 0$ on the compact set $K$.
\medskip

\noindent\textbf{Step $3$. b).} Now, we show that $\mathcal{A}_{1,2,2}\rightarrow 0$ as $n\rightarrow \infty$.
Due to the symmetry of $\varphi^{n}$, we can establish that
\begin{align*}
\mathcal{A}_{1,2,2}
=&\frac{\sqrt{n}}{\tau}\int_{\mathbb{R}^{V_n}\times \mathbb{R}^{V_n}}\Bigg[\frac{\psi_n(x^N+\tau n^{-1/2}r^N)}{\psi_n(x^N)}\wedge \frac{\frac{\psi_n(x^n-\tau n^{-1/2}r^n)}{\psi_n(x^N-\tau n^{-1/2}r^N)}}{\frac{\psi_n(x^n)}{\psi_n(x^N)}}\Bigg]\\
&\hspace{6cm}\times\varphi^{n}(r^n)dr^n\psi_n(x^n)dx^n \\
	&-\frac{\sqrt{n}}{\tau}\int_{\mathbb{R}^{V_n}\times \mathbb{R}^{V_n}}\Bigg[1\wedge \frac{\psi_n(x^n+\tau n^{-1/2}r^n)}{\psi_n(x^n)}\Bigg]\varphi^{n}(r^n)dr^n\psi_n(x^n)dx^n \\
	=&\frac{\sqrt{n}}{\tau}\int_{\mathbb{R}^{V_n}\times \mathbb{R}^{V_n}}\Bigg[\frac{\psi_n(x^N+\tau n^{-1/2}r^N)}{\psi_n(x^N)}\wedge \frac{\frac{\psi_n(x^n-\tau n^{-1/2}r^n)}{\psi_n(x^N-\tau n^{-1/2}r^N)}}{\frac{\psi_n(x^n)}{\psi_n(x^N)}}\Bigg]\\
	&\hspace{6cm}\times\varphi^{n}(r^n)dr^n\psi_n(x^n)dx^n \\
	&-\frac{\sqrt{n}}{\tau}\int_{\mathbb{R}^{V_n}\times \mathbb{R}^{V_n}}\Bigg[1\wedge \frac{\psi_n(x^n-\tau n^{-1/2}r^n)}{\psi_n(x^n)}\Bigg]\varphi^{n}(r^n)dr^n\psi_n(x^n)dx^n.
\end{align*}
Set $$\mathcal{S}_1:=\Bigg\{ \frac{\psi_n(x^N+\tau n^{-1/2}r^N)}{\psi_n(x^N)}\leq \frac{\frac{\psi_n(x^n-\tau n^{-1/2}r^n)}{\psi_n(x^N-\tau n^{-1/2}r^N)}}{\frac{\psi_n(x^n)}{\psi_n(x^N)}}\Bigg\}$$
and 
\begin{align*}
	\mathcal{S}_2:=&\Bigg\{ 1\leq \frac{\psi_n(x^n-\tau n^{-1/2}r^n)}{\psi_n(x^n)}\Bigg\}\\
	=&\Bigg\{ \frac{\psi_n(x^N)}{\psi_n(x^N-\tau n^{-1/2}r^N)}\leq \frac{\frac{\psi_n(x^n-\tau n^{-1/2}r^n)}{\psi_n(x^N-\tau n^{-1/2}r^N)}}{\frac{\psi_n(x^n)}{\psi_n(x^N)}}\Bigg\}\\
	=&\Bigg\{ \frac{\psi_n(x^N)}{\psi_n(x^N+\tau n^{-1/2}r^N)}\leq \frac{\frac{\psi_n(x^n-\tau n^{-1/2}r^n)}{\psi_n(x^N+\tau n^{-1/2}r^N)}}{\frac{\psi_n(x^n)}{\psi_n(x^N)}}\Bigg\}.
\end{align*}
Then for the following term in $\mathcal{A}_{1,2,2}$,
\begin{align*}
\widetilde{\mathcal{A}}_{1,2,2}:=&\frac{\sqrt{n}}{\tau}\int_{ \mathcal{S}_1 }\Bigg[\frac{\psi_n(x^N+\tau n^{-1/2}r^N)}{\psi_n(x^N)}\Bigg]\varphi^{n-N}(r^{n-N})dr^{n-N}\\
	&+\frac{\sqrt{n}}{\tau}\int_{ \mathcal{S}_1^c}\frac{\frac{\psi_n(x^n-\tau n^{-1/2}r^n)}{\psi_n(x^N-\tau n^{-1/2}r^N)}}{\frac{\psi_n(x^n)}{\psi_n(x^N)}}\varphi^{n-N}(r^{n-N})dr^{n-N} \\
	&-\frac{\sqrt{n}}{\tau}\int_{ \mathcal{S}_2}\varphi^{n-N}(r^{n-N})dr^{n-N} \\
	&-\frac{\sqrt{n}}{\tau}\int_{ \mathcal{S}_2^c}\Bigg[\frac{\psi_n(x^n-\tau n^{-1/2}r^n)}{\psi_n(x^n)}\Bigg]\varphi^{n-N}(r^{n-N})dr^{n-N}\\
	=&\frac{\sqrt{n}}{\tau}\Bigg[\frac{\psi_n(x^N+\tau n^{-1/2}r^N)}{\psi_n(x^N)}-1\Bigg]\Bigg\{\int_{ \mathcal{S}_1 }\varphi^{n-N}(r^{n-N})dr^{n-N}\\
	&\hspace{1.5cm}-\int_{ \mathcal{S}_2^c}\frac{\frac{\psi_n(x^n-\tau n^{-1/2}r^n)}{\psi_n(x^N+\tau n^{-1/2}r^N)}}{\frac{\psi_n(x^n)}{\psi_n(x^N)}}\varphi^{n-N}(r^{n-N})dr^{n-N}\Bigg\}\\
	&+\frac{\sqrt{n}}{\tau}\Bigg\{\int_{ \mathcal{S}_1 }\varphi^{n-N}(r^{n-N})dr^{n-N}\\
	&\hspace{1.5cm}-\int_{ \mathcal{S}_2^c}\frac{\frac{\psi_n(x^n-\tau n^{-1/2}r^n)}{\psi_n(x^N+\tau n^{-1/2}r^N)}}{\frac{\psi_n(x^n)}{\psi_n(x^N)}}\varphi^{n-N}(r^{n-N})dr^{n-N}\Bigg\}\\
	&+\frac{\sqrt{n}}{\tau}\int_{ \mathcal{S}_1^c}\frac{\frac{\psi_n(x^n-\tau n^{-1/2}r^n)}{\psi_n(x^N-\tau n^{-1/2}r^N)}}{\frac{\psi_n(x^n)}{\psi_n(x^N)}}\varphi^{n-N}(r^{n-N})dr^{n-N} \\
	&-\frac{\sqrt{n}}{\tau}\int_{ \mathcal{S}_2} \varphi^{n-N}(r^{n-N})dr^{n-N}.
\end{align*}
Supposing that $$\psi_n(x^N-\tau n^{-1/2}r^N)\geq \psi_n(x^N+\tau n^{-1/2}r^N),$$ 
we can handle the opposite case similarly. Thus, we obtain
$$0<\frac{\frac{\psi_n(x^n-\tau n^{-1/2}r^n)}{\psi_n(x^N-\tau n^{-1/2}r^N)}}{\frac{\psi_n(x^n)}{\psi_n(x^N)}}\leq \frac{\frac{\psi_n(x^n-\tau n^{-1/2}r^n)}{\psi_n(x^N+\tau n^{-1/2}r^N)}}{\frac{\psi_n(x^n)}{\psi_n(x^N)}},$$
which yields
\begin{align*}
\widetilde{\mathcal{A}}_{1,2,2}\leq &\frac{\sqrt{n}}{\tau}\Bigg[\frac{\psi_n(x^N+\tau n^{-1/2}r^N)}{\psi_n(x^N)}-1\Bigg]\Bigg\{\int_{ \mathcal{S}_1 }\varphi^{n-N}(r^{n-N})dr^{n-N}\\
	&\hspace{1.7cm}-\int_{ \mathcal{S}_2^c}\frac{\frac{\psi_n(x^n-\tau n^{-1/2}r^n)}{\psi_n(x^N+\tau n^{-1/2}r^N)}}{\frac{\psi_n(x^n)}{\psi_n(x^N)}}\varphi^{n-N}(r^{n-N})dr^{n-N}\Bigg\}\\
	&+\frac{\sqrt{n}}{\tau}\int_{ \mathcal{S}_1^c \cap  \mathcal{S}_2}\Bigg[\frac{\frac{\psi_n(x^n-\tau n^{-1/2}r^n)}{\psi_n(x^N-\tau n^{-1/2}r^N)}}{\frac{\psi_n(x^n)}{\psi_n(x^N)}}-1\Bigg]\varphi^{n-N}(r^{n-N})dr^{n-N}.
\end{align*}

Since on the set $\mathcal{S}_1^c \cap \mathcal{S}_2$, we have
\begin{align*}
\Bigg|\frac{\frac{\psi_n(x^n-\tau n^{-1/2}r^n)}{\psi_n(x^N-\tau n^{-1/2}r^N)}}{\frac{\psi_n(x^n)}{\psi_n(x^N)}}-1\Bigg|\leq \mathcal{C}_{\operatorname{max}}^N,
\end{align*}
where 
\begin{align*}
\mathcal{C}_{\operatorname{max}}^N:=\max\Bigg(\Bigg|\frac{\psi_n(x^N+\tau n^{-1/2}r^N)}{\psi_n(x^N)}-1\Bigg|,\; \Bigg|\frac{\psi_n(x^N)}{\psi_n(x^N-\tau n^{-1/2}r^N)}-1\Bigg|\Bigg).
\end{align*}
we can bound $\widetilde{\mathcal{A}}_{1,2,2}$ as follows:
\begin{align*}
\widetilde{\mathcal{A}}_{1,2,2}\leq & \frac{\sqrt{n}}{\tau}\mathcal{C}_{\operatorname{max}}^N\Bigg(\Bigg|\int_{ \mathcal{S}_1 }\varphi^{n-N}(r^{n-N})dr^{n-N}\\
	&\hspace{1.7cm}-\int_{ \mathcal{S}_2^c}\frac{\frac{\psi_n(x^n-\tau n^{-1/2}r^n)}{\psi_n(x^N+\tau n^{-1/2}r^N)}}{\frac{\psi_n(x^n)}{\psi_n(x^N)}}\varphi^{n-N}(r^{n-N})dr^{n-N}\Bigg|\\
	&\hspace{1.7cm}+\int_{ \mathcal{S}_1^c \cap  \mathcal{S}_2}\varphi^{n-N}(r^{n-N})dr^{n-N} \Bigg).
\end{align*}	
Note that as $n \rightarrow \infty$, we have 
\begin{align}
	\label{eqn:bound_limit}
\frac{\psi_n(x^N+\tau n^{-1/2}r^N)}{\psi_n(x^N)}\rightarrow 1 \quad \text{and}\quad \frac{\psi_n(x^N)}{\psi_n(x^N-\tau n^{-1/2}r^N)}\rightarrow 1.
\end{align}	
 Therefore, 
\begin{align*}
	\int_{ \mathcal{S}_1^c \cap  \mathcal{S}_2}\varphi^{n-N}(r^{n-N})dr^{n-N} \rightarrow 0.
\end{align*}	

Next, we aim to show that
\begin{align*}
&\int_{ \mathcal{S}_1 }\varphi^{n-N}(r^{n-N})dr^{n-N}-\int_{ \mathcal{S}_2^c}\frac{\frac{\psi_n(x^n-\tau n^{-1/2}r^n)}{\psi_n(x^N+\tau n^{-1/2}r^N)}}{\frac{\psi_n(x^n)}{\psi_n(x^N)}}\varphi^{n-N}(r^{n-N})dr^{n-N}\longrightarrow 0.
\end{align*}	
It holds 
by \eqref{eqn:bound_limit}, Corollary \ref{corollary},
 and the following equation (on page $4073$ of \cite{zanella2017dirichlet})
\begin{align*}
&\int_{y>0} \exp\left(-\frac{(y+(1/2)\tau^2 s^2(\pi))^2}{2\tau^2 s^2(\pi)}\right)dy\\
&\hspace{3cm}-\int_{y\leq 0} e^y\exp \left(-\frac{(y+(1/2)\tau^2 s^2(\pi))^2}{2\tau^2 s^2(\pi)}\right)dy=0.
\end{align*}

Up to this point,  it suffices to show that $\frac{\sqrt{n}}{\tau}\mathcal{C}_{\operatorname{max}}^N$ is bounded on the compact set $K$, to complete the proof of $\widetilde{\mathcal{A}}_{1,2,2}\rightarrow 0$. To this end, note that
 \begin{align*}
 	&\frac{\sqrt{n}}{\tau}\max\Bigg(\Bigg|\frac{\psi_n(x^N+\tau n^{-1/2}r^N)}{\psi_n(x^N)}-1\Bigg|,\; \Bigg|\frac{\psi_n(x^N)}{\psi_n(x^N-\tau n^{-1/2}r^N)}-1\Bigg|\Bigg)\\
 	&\leq C \sqrt{n}\frac{1}{\sqrt{n}} \sum_{i=1}^N \left|r_i D_{x_i} \psi_n(x^N)\right|.
 \end{align*}
Here, $C$ represents an absolute constant independent of  $n$, $N$ is a fixed constant, and $r_i D_{x_i} \psi_n(x^N)$ for each $i$ is bounded on the compact set $K$ owing to the continuity of the first-order derivatives of $\psi_n$. Finally, by the law of total expectation,  $\widetilde{\mathcal{A}}_{1,2,2}\rightarrow 0$ gives $\mathcal{A}_{1,2,2}\rightarrow 0$.
\medskip

\noindent\textbf{Step $4$.} 
In this step, we investigate the asymptotic behavior of $\mathcal{A}_2$, defined in  \eqref{eqn:terms_in_numerator}, 
 as $n$ goes to infinity. Notably,  $\mathcal{A}_2$ can be rewritten as
\begin{align*}
	\mathcal{A}_2=&-\dfrac{\tau}{\sqrt{2}}\int_{\mathbb{R}^{V_n}\times \mathbb{R}^{V_n}}f_n(x^n)a_n(x^n, x^n+\tau n^{-1/2}r^n)\\
	&\hspace{0.3cm}\times \Bigg\{\frac{\xi(x^N-\tau n^{-1/2}r^N,r^N)\psi_n(x^N-\tau n^{-1/2}r^N)-\xi(x^N,r^N)\psi_n(x^N)}{-\tau n^{-1/2}\psi_n(x^N)} \Bigg\}\\
	&\hspace{7.5cm}\times \varphi^{n}(r^n)dr^n \psi_n(x^n)dx^n.
\end{align*}
Note that the imposed regularity of $\psi_n$ implies that it is strictly positive and bounded with bounded first-order derivative on the compact set $K$. Consequently,  by Theorem \ref{thm:a_convergence}, the smoothness of $\xi$, and the compact support on $K$, we have
\begin{align}\label{eqn:pointwise_cvg}
& \Bigg\{\frac{\xi(x^N-\tau n^{-1/2}r^N,r^N)\psi_n(x^N-\tau n^{-1/2}r^N)-\xi(x^N,r^N)\psi_n(x^N)}{-\tau n^{-1/2}\psi_n(x^N)} \Bigg\}\nonumber\\
	&\hspace{2cm}\times \int_{\mathbb{R}^{n-N}}a_n(x^n, x^n+\tau n^{-1/2}r^n)\varphi^{n-N}(r^{n-N})dr^{n-N}\nonumber\\
& \longrightarrow c(\tau)\frac{\left[\nabla_{x^N} \Big(\xi(x^N,r^N)\psi(x^N)\Big)\right]^Tr^N}{\psi(x^N)},
\end{align}
pointwisely, where $\psi$ is the density of $\pi$ and coincides with $\psi_n$ on a fixed number of arguments $N\leq n$. Therefore, this expression is bounded by
\begin{align*}
	& \sup_{(x^N,r^N)\in K} \left|\nabla_{x^N} \Big(\xi(x^N,r^N)\psi_n(x^N)\Big)\right|\times \sup_{(x^N,r^N)\in K}\left\{ \frac{|r^N|}{ \psi_n(x^N) }\right\}\\
	&\hspace{0.5cm}\times \limsup_{n\rightarrow \infty}\int_{\mathbb{R}^{n-N}}a_n(x^n, x^n+\tau n^{-1/2}r^n)\varphi^{n-N}(r^{n-N})dr^{n-N}.
\end{align*}
Hence, the convergence in \eqref{eqn:pointwise_cvg} is also in $L^2(X,W)$. Recalling that $f_n\in \mathcal{H}_n$ converges to $f\in \mathcal{H}$ weakly, we have by
Lemma \ref{useful_cvg_equi} that 
\begin{align}
	\label{eqn:numeratorA2}
	\mathcal{A}_2 \longrightarrow -\dfrac{\tau c(\tau)}{\sqrt{2}}\mathbb{E}\left(f(X) \left[\nabla_{x^N} \Big(\xi(X^N,R^N)\psi(X^N)\Big)\right]^T R^N \psi^{-1}(X^N)\right).
\end{align}
\smallskip

\noindent\textbf{Step $5$.} 
Plugging $\mathcal{A}_1\rightarrow 0$, as achieved in Step $3$,  and \eqref{eqn:numeratorA2} into \eqref{eqn:terms_in_numerator}, we derive the following result for the numerator in \eqref{eqn:Cauchy-Schwarz_ratio}:
\begin{align}\label{eqn:numerator}
	&\sqrt{\dfrac{n}{2}}\int_{\mathbb{R}^{V_n}\times \mathbb{R}^{V_n}} \Big[f_n(x^n+\tau n^{-1/2}r^n)-f_n(x^n)\Big] a_n(x^n, x^n+\tau n^{-1/2}r^n) \xi(x^N,r^N)\nonumber\\
	&\hspace{7cm}\times \varphi^{n}(r^n)dr^n\psi_n(x^n)dx^n\nonumber\\
	&\longrightarrow -\dfrac{\tau c(\tau)}{\sqrt{2}}\mathbb{E}\left(f(X) \left[\nabla_{x^N} \Big(\xi(X^N,R^N)\psi(X^N)\Big)\right]^T R^N \psi^{-1}(X^N)\right).
\end{align}
Substituting  \eqref{eqn:numerator} and \eqref{eqn:denominator} into \eqref{eqn:Cauchy-Schwarz_ratio}, we obtain
\begin{align*}
	&\liminf_{n\rightarrow\infty}\sqrt{\sE^n(f_n)}\\
	&\hspace{1cm}\geq  \frac{-\dfrac{\tau c(\tau)}{\sqrt{2}}\mathbb{E}\left(f(X) \left[\nabla_{x^N} \Big(\xi(X^N,R^N)\psi(X^N)\Big)\right]^T R^N \psi^{-1}(X^N)\right)}{\sqrt{c(\tau)}\|  \xi(X^N,R^N) \|_{L^2_{(X,R)}}}.
\end{align*}
If $f \in S$, where $S$ is defined in \eqref{def:S}, an integration-by-parts argument, utilizing the compact support, yields
\begin{align*}
	&-\mathbb{E}\left(f(x) \left[\nabla_{x^N} \Big(\xi(X^N,R^N)\psi_n(X^N)\Big)\right]^T R^N \psi^{-1}(X^N)\right)\\
	&\hspace{3cm}=\mathbb{E}\left( \xi(X^N,R^N)\left[\Big(\nabla_{x^N} f(X)\Big)^T R^N\right] \right)\\
	&\hspace{3cm}=\mathbb{E}\left( \xi(X^N,R^N)\left[\Big(\nabla f(X)\Big)^T R\right] \right),
\end{align*}
where the last equality is by the independence of $\{R_i\}_i$. That is, we in fact have
\begin{align*}
	\liminf_{n\rightarrow\infty}	\sqrt{\sE^n(f_n)}\geq & \dfrac{\tau \sqrt{c(\tau)}}{\sqrt{2}}\sup_{N\geq 1}\sup_{\substack{\xi\in C_0^{\infty}(\mathbb{R}^{2N})\\ \xi\neq 0}}\frac{\mathbb{E}\left( \xi(X^N,R^N)\left[\Big(\nabla f(X)\Big)^T R\right] \right)}{\|  \xi(X^N,R^N) \|_{L^2_{(X,R)}}}\\
	=& \dfrac{\tau \sqrt{c(\tau)}}{\sqrt{2}}\left\|\Big(\nabla f(X)\Big)^T R\right\|_{L^2_{(X,R)}}\\
	=& \dfrac{\tau \sqrt{c(\tau)}}{\sqrt{2}}\sqrt{\mathbb{E}|\nabla f(X)|^2}\\
	=& \sqrt{\sE(f)},
\end{align*}
where the second equality is by the independence of $X$ and $R$, along with the second moment of each component of $R$ being one. 

Recall that 
$$\mathcal{E}(\cdot): \mathcal{H}\rightarrow \overline{\mathbb{R}},\quad\quad\sE(f) = \left\{ \begin{array}{lcl}
	\sE(f), && \mbox{for}
	\; f\in S, \\ 
	\infty, && \mbox{for}\; f \in \mathcal{H}\backslash S.
\end{array}\right.$$
Thus far, we have concluded the proof of (M$1$) for the major case where $f \in S$. The proof for the other case, $f \in \mathcal{H}\backslash S$ where $\sE(f)=\infty$, is omitted, as it can be followed using the outlined proof on pages $4073-4074$ of \cite{zanella2017dirichlet}.

\subsection{Proof of Condition (M$2$)}
\label{sec:Proof_M2}

In this section, we provide the proof of Condition (M$2$). Prior to delving into the formal proof in Section \ref{sec:ProofsM2}, we provide an accessible outline of the proof strategy in Section \ref{sec:ProofsM2_sketch} to aid understanding.

\subsubsection{Sketch of the proof of Condition (M$2$)}
\label{sec:ProofsM2_sketch}

Condition (M$2$) in Definition \ref{def:Mosco_converges} states that for every $f\in \mathcal{H}$, there exists a sequence $\{f_n\}$ with $f_n\in \mathcal{H}_n$ converging to $f$ strongly in the sense of Definition \ref{def:convergences}, such that 
$$\mathcal{E}(f)=\lim_{n\rightarrow \infty}\mathcal{E}^n(f_n).$$
Thus, our goal is to construct such a weakly converging sequence $\{f_n\}$.  
In contrast to \cite{zanella2017dirichlet}, the challenge here lies in handling functions in converging Hilbert spaces. To address this, we employ the sequence $\Gamma_n$ defined in Definition \ref{def:Hiblertspaces_cvg}, which is asymptotically close to a unitary operator (as noted on page $611$ of \cite{kuwae2003convergence}), such that $\Gamma_n f=f$ for all $f\in \mathcal{H}$. 
Then, we use the for every $f\in S$, there exists a converging sequence $\{\widehat{f}_m\}$ such that $\mathcal{E}(\widehat{f}_m)\rightarrow \mathcal{E}(f)$ where $\widehat{f}_m\in \mathscr{F}C_{b}^\infty$ defined in \eqref{eqn:mathscr_F} that is dense in $S$. We complete the proof using the triangle inequality after establishing $\sE^n(\Gamma_n \widehat{f}_m)\rightarrow\sE(\widehat{f}_m)$.

\subsubsection{Formal proof of Condition (M$2$)}
\label{sec:ProofsM2} 

For any $f\in \mathcal{H}$, if $f\in \mathcal{H}\backslash S$, take $f_n:=\Gamma_n f$. Clearly $f_n\in \mathcal{H}_n$ converges to $f\in \mathcal{H}$ weakly. Proof of Condition (M$2$) can be finished by following from Condition (M$1$) that
$$\infty=\sE(f)\leq \liminf_{n\rightarrow\infty} \sE^n(f_n)=\lim_{n\rightarrow\infty} \sE^n(f_n)=\infty.$$
In the following, we only consider $f\in S$. 

Recall that $\mathscr{F}C_{b}^\infty$ defined in \eqref{eqn:mathscr_F} is dense in $S$ with respect to the $\sE_1^{1/2}$ norm. For every $f\in \mathcal{H}$, one can choose a sequence $\{\widehat{f}_m\}$ where $\widehat{f}_m\in \mathscr{F}C_{b}^\infty$ such that as $m\rightarrow \infty,$
$$\widehat{f}_m\rightarrow f \quad \text{and}\quad \mathcal{E}(\widehat{f}_m)\rightarrow \mathcal{E}(f).$$
For any fixed $m$, noting that $\widehat{f}_m\in \mathscr{F}C_{b}$ depends on a finite number of components. Without loss of generality, we suppose it depends on the first $N\geq 1$ components.
By the definition of $\mathcal{E}^n$ given in \eqref{En} and the law of total expectation,
\begin{align*}
\sE^n(\Gamma_n \widehat{f}_m)
&=\frac{n}{2}\mathbb{E}\Big[\big(\Gamma_n \widehat{f}_m(X^N+\tau n^{-1/2}R^N)-\Gamma_n \widehat{f}_m(X^N)\big)^2\\
&\hspace{2cm}\times\mathbb{E}\big[ a_n(X^n, X^n+\tau n^{-1/2}R^n)|X^N, R^N \big]\Big]\\
&=\frac{\tau^2}{2}\mathbb{E}\Bigg[\Bigg(\frac{\Gamma_n \widehat{f}_m(X^N+\tau n^{-1/2}R^N)-\Gamma_n \widehat{f}_m(X^N)}{\tau n^{-1/2}}\Bigg)^2 \\
&\hspace{2cm}\times\mathbb{E}\big[ a_n(X^n, X^n+\tau n^{-1/2}R^n)|X^N, R^N \big]\Bigg].
\end{align*}
Note that in the above equation, since the function $a_n$ is bounded by $1$, the expression inside the outer expectation is bounded by $(|R^N|\|\nabla(\Gamma_n \widehat{f}_m)\|_{\infty})^2$ which is an integrable random variable. By  Theorem \ref{thm:a_convergence} and the regularity of $\widehat{f}_m$, together with the fact that $\Gamma_n$ is asymptotically close to a unitary operator, we have
\begin{align*}
&\Bigg(\frac{\Gamma_n \widehat{f}_m(X^N+\tau n^{-1/2}R^N)-\Gamma_n \widehat{f}_m(X^N)}{\tau n^{-1/2}}\Bigg)^2\\
&\hspace{4cm}\times\mathbb{E}\big[ a_n(X^n, X^n+\tau n^{-1/2}R^n)|X^n, R^N \big]\\
&\longrightarrow ((\nabla \widehat{f}_m (X^N))^TR^N)^2c(\tau)
\end{align*}
pointwise as $n\rightarrow \infty$. Therefore, by the dominated convergence theorem, we have
\begin{align*}
\sE^n(\Gamma_n \widehat{f}_m)\longrightarrow \frac{\tau^2c(\tau)}{2}\mathbb{E}\Big((\nabla \widehat{f}_m (X^N))^TR^N\Big)^2&=\frac{\tau^2c(\tau)}{2}\mathbb{E}\Big|\nabla \widehat{f}_m (X^N)\Big|^2\\
&=\sE(\widehat{f}_m),
\end{align*}
where the equality is by the independence between $X^N$ and $R^N=\{R_i\}_{i=1,\ldots,N}$, and the second moment of each $R_i$ being one.

By Proposition $7.2$ (on page $255$) of \cite{Kolesnikov2005Convergence}, the space $\cup_{n=1}^{\infty}\mathcal{H}_n$ is metrizable by some metric $d$. 
Let $\{\mathcal{M}(m)\}$ be a sequence of natural numbers such that $\mathcal{M}(m+1)>\mathcal{M}(m)$, 
$$d(\widehat{f}_m, \Gamma_n \widehat{f}_m)\leq 1/m \quad\text{and}\quad
|\sE^n(\Gamma_n \widehat{f}_m)-\sE(\widehat{f}_m)|\leq 1/m,$$
for any $n>\mathcal{M}(m)$.
To complete the proof of Condition (M$2$), it suffices to construct a sequence $\{f_n\}$ with $f_n\in \mathcal{H}_n$ converging to $f$ strongly such that 
$$\mathcal{E}(f)=\lim_{n\rightarrow \infty}\mathcal{E}^n(f_n).$$
Now, set $f_n=\Gamma_n \widehat{f}_{k(n)}$, where $k(n)$ is chosen in such a way that
$\mathcal{M}(k(n))<n\leq \mathcal{M}(k(n)+1)$ if $n>\mathcal{M}(2)$ and $k(n)=1$ otherwise. Hence, we have
$f_n\in \mathcal{H}_n$, and $f_n\rightarrow f$ strongly, and by the triangle inequality
\begin{align*}
|\sE^n(f_n)-\sE(f)|&=|\sE^n(\Gamma_n \widehat{f}_{k(n)})-\sE(f)|\\
&\leq |\sE^n(\Gamma_n \widehat{f}_{k(n)})-\sE(\widehat{f}_{k(n)})|+|\sE(\widehat{f}_{k(n)})-\sE(f)|,
\end{align*}
which yields
 $\sE^n(f_n)\rightarrow \sE(f)$ as $n\rightarrow \infty$.

\section{Discussion and conclusion}
\label{sec:Discussion_and_conclusion}

In this paper, we introduced Mosco convergence of Dirichlet forms on changing Hilbert spaces to analyze the optimal scaling of the RWM algorithm for the Gibbs measure on large graphs. We demonstrated the significant advantages of the Dirichlet form approach over the standard diffusion approach. In Corollary \ref{thm:corollary}, we established that the limiting Dirichlet form $\mathcal{E}$ coincides with the limiting diffusion $X$. Consequently, the standard optimal scaling procedure described in Section \ref{sec:Optimal_scaling} can be applied. We acknowledge that this paper does not provide a full generalization of \cite{Breyer2000From}. They also discussed the uniqueness of the Gibbs measure $\pi$, which involves phase transitions, while our focus here is on advocating the Dirichlet form approach and comparing it with the diffusion approach. Therefore, phase transition analysis is beyond the scope of this paper, and we assumed the existence of only one Gibbs measure $\pi$. Furthermore, we acknowledge that \cite{zanella2017dirichlet} paved the way for using Dirichlet forms on optimal scaling problems. 

We believe that the strategies developed in this paper could assist in reducing the regularity assumptions imposed in the aforementioned literature with dependent distributions. In addition to exploring a different model, suggested by one referee, another avenue for future research could involve developing a general update proposal using correlated $R^n$ instead of them being i.i.d. We are not aware of any optimal scaling paper in that setting, which could potentially present another limitation when employing the diffusion approach. It is worth mentioning that conducting a comprehensive analysis would require significant computational resources and examinations for high-dimensional MCMC algorithms. Given that the Gibbs distribution encompasses the normal distribution, for which optimal scaling analysis results are well-known, and considering our focus on the comparison of theoretical analysis methods, we defer the numerical study to future research. 

It is worth mentioning that the distributions considered in \cite{durmus2017optimal} can be either nondifferentiable at some points or supported on an interval, through introducing the concept of locally asymptotic normality (LAN) into this framework \citep{le2012asymptotic, bickel1993efficient}. 
Our proof of Theorem \ref{thm:a_convergence} shares some similarity with LAN and rescaled LAN \citep{ning2021scalable}, which generalizes LAN to an enlarged neighborhood and contributes to Monte Carlo log-likelihood evaluations when the number of observations is large. 
Rescaled LAN might be applicable to product-formed distributions using the approach developed in \cite{durmus2017optimal}, thus contributing to further optimal scaling analysis. For general distributions, second-order derivatives are still required in our analysis. However, \cite{Breyer2000From} and \cite{yang2020optimal} demand strong third-order conditions.  Our model corresponds to the factor graph models of \cite{yang2020optimal} when the number of cliques $k$ goes to infinity in equation (41) therein; see equation \eqref{eqn:generalU} for illustration. This aspect was not addressed in their work, as they solely focused on finite cases. 
The uniformly bounded cliques in \cite{yang2020optimal} are comparable to the spatial homogeneity enforced in this paper from a mathematical analysis perspective, but that might be advantageous to broaden the underlying graph structures.

\appendix

\section{Numerical demonstration}
\label{sec:Numerical}
The family of generalized normal distributions has garnered significant attention from the engineering community due to the flexibility of its parametric form in modeling various physical phenomena \citep{dytso2018analytical}. In this section, we focus on the generalized normal distribution, which has the following PDF:
\[
f(x \mid \mu, \alpha, \beta) = \frac{\beta}{2\alpha \Gamma\left(1/\beta\right)} \exp\left( - \left(|x - \mu|/\alpha\right)^\beta \right),
\]	
where we set \(\mu = 0\), \(\alpha = 1\), and \(\beta =3\). It does not satisfy the growth condition specified by Hypothesis (H$5$) in \cite{Breyer2000From}; however, we have relaxed this requirement by utilizing the convergence of Dirichlet forms in Section \ref{sec:Mosco_convergence}. The Python code that reproduces our numerical analyses is publicly available at https://github.com/patning/MCMC\_Optimal\_Scaling.

\begin{figure*}[t!]
	\centering
	\includegraphics[width = 5in]{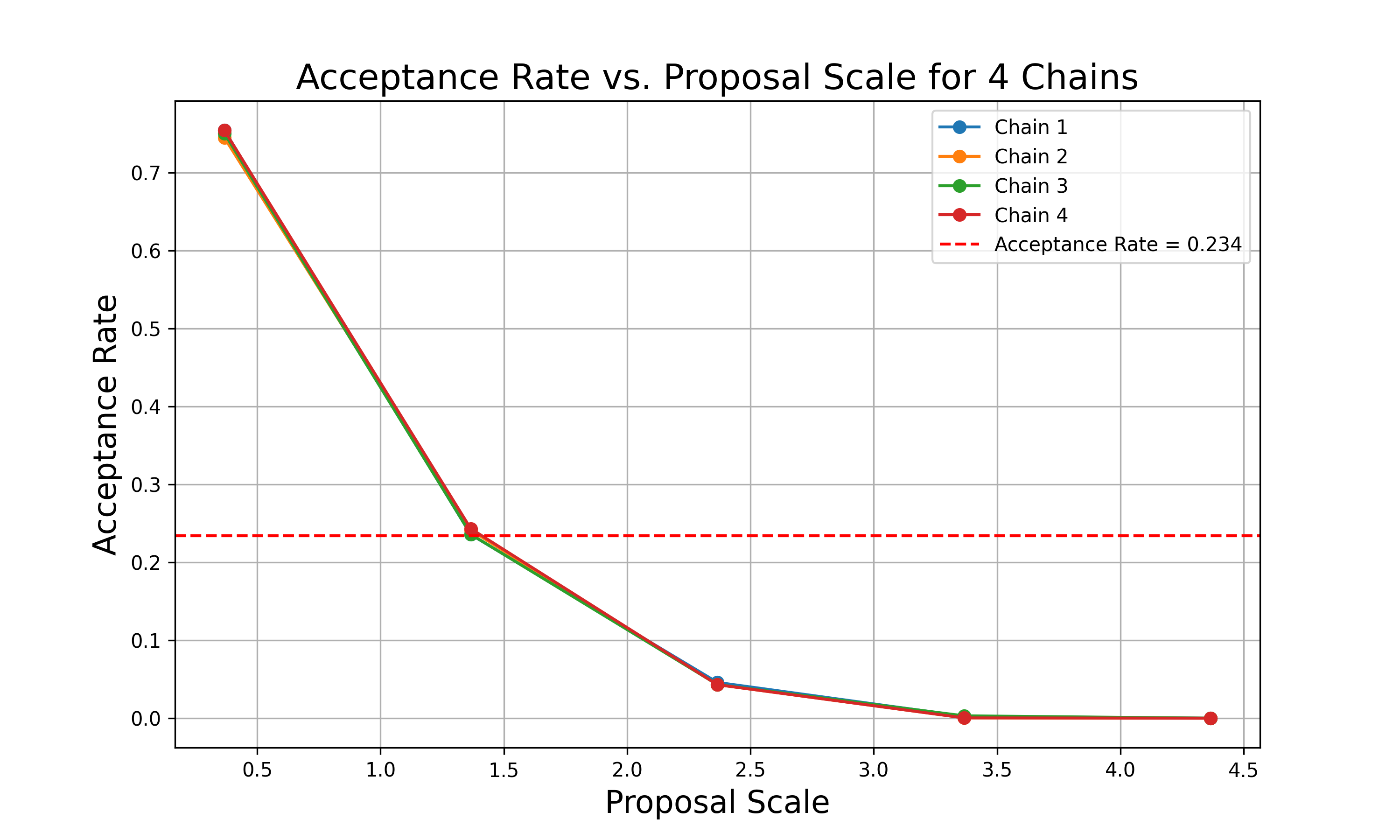}
	\caption{Acceptance rate versus proposal scale for four Markov chains. Each curve represents a different chain, and the horizontal line at 0.234 marks the optimal acceptance rate for the RWM algorithm.
	}
	\label{fig:accept_prob}
\end{figure*}

We employed the RWM algorithm to sample from  a 100-dimensional target distribution, where the above generalized normal distribution with \(\mu = 0\), \(\alpha = 1\), and \(\beta =3\) apply to each dimension independently. Four independent MCMC chains were generated, each starting from a 100-dimensional zero vector. For each chain, we run 20,000 iterations of the RWM algorithm, discarding the first 5,000 samples as a burn-in period to ensure that the chains were not influenced by the starting point.  The proposal distribution used was a multivariate normal distribution centered at the current state with variance-covariance matrix being the identity matrix. Five proposal scales $[0.3666,\, 1.366,\, 2.366,\, 3.366,\, 4.366]$ were tested as visualized in Figures \ref{fig:accept_prob} and \ref{fig:ESJD}, where $1.366$ is calculated by the formula $\tau^*\approx 2.38 /s(\pi)$ in Section \ref{sec:Optimal_scaling}. Specifically, in this example, $s(\pi)$ defined in equation \eqref{eqn:s_pi_def} is given by
\begin{align*}
s(\pi)=\left\{\mathbb{E}\left[\frac{d}{dx} \log\big(f(x \mid \mu, \alpha, \beta)\big)\right]^2\right\}^{1/2}=3[\mathbb{E}X^4]^{1/2}&=3[\Gamma(5/3)/\gamma(1/3)]^{1/2}\\
&\approx 1.7415,
\end{align*}
by the moment result in \cite{nadarajah2005generalized}.

\begin{figure*}[t!]
	\centering
	\includegraphics[width = 5in]{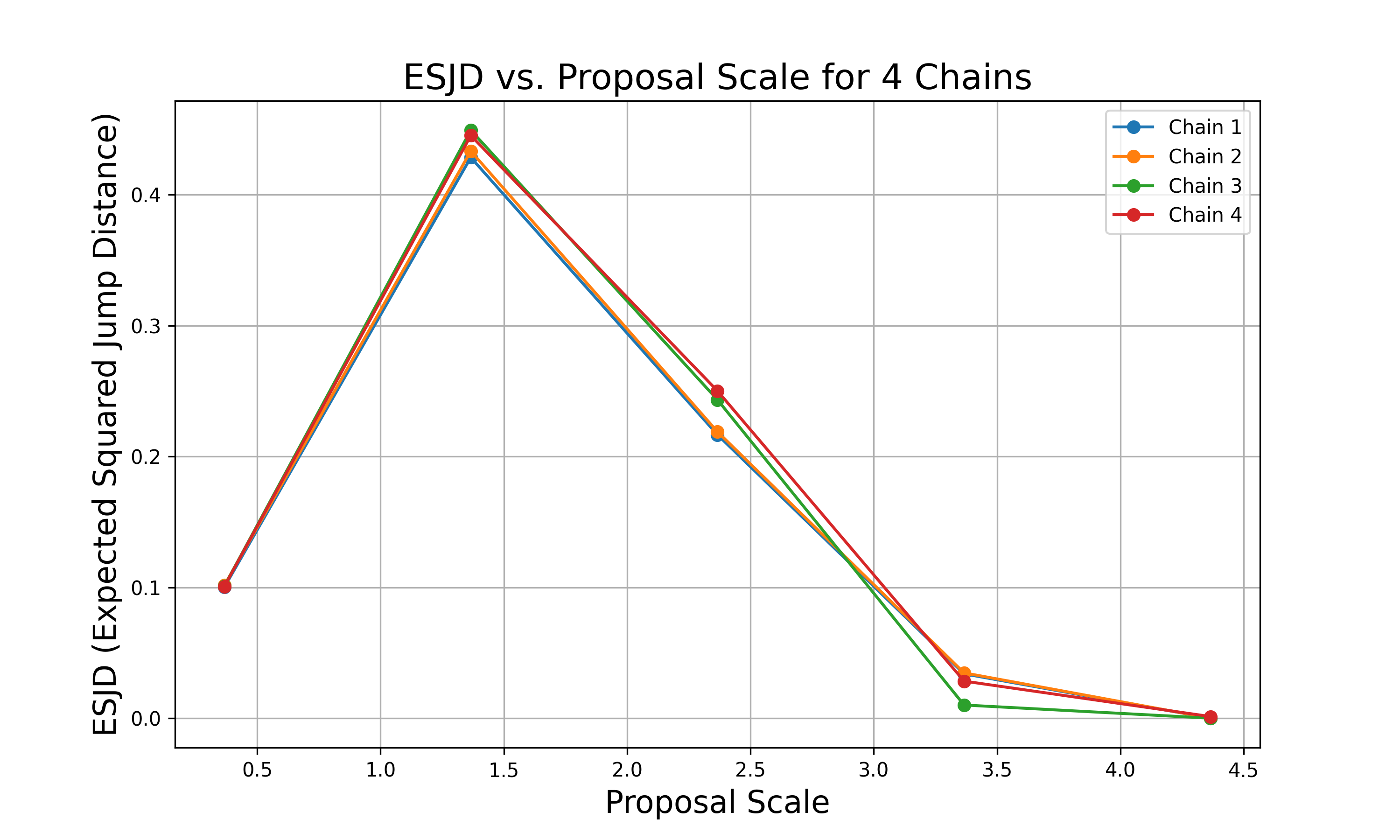}
	\caption{ESJD  versus  proposal scale for four Markov chains. Each curve represents the ESJD behavior for a different chain with varying proposal scales.}
	\label{fig:ESJD}
\end{figure*}

Figure \ref{fig:accept_prob} reports the acceptance rate as a function of the proposal scale for four Markov chains. As expected, the acceptance rate decreases as the proposal scale increases. When the proposal scale is too small, the proposed value is very close to the current state, making it easier to accept. However, this leads to a strong correlation between the current and newly accepted states, causing the Markov chain to mix very slowly. On the other hand, when the proposal scale is too large, the proposed value is far from the current state, resulting in low acceptance probabilities. In this case, the chain can also mix slowly due to repeated rejections, which maintain identical states across iterations. 
Hence, the acceptance rate alone should not be the sole criterion for evaluating the performance of a MCMC algorithm.

\begin{figure*}[t!]
	\centering
	\includegraphics[width = 5in]{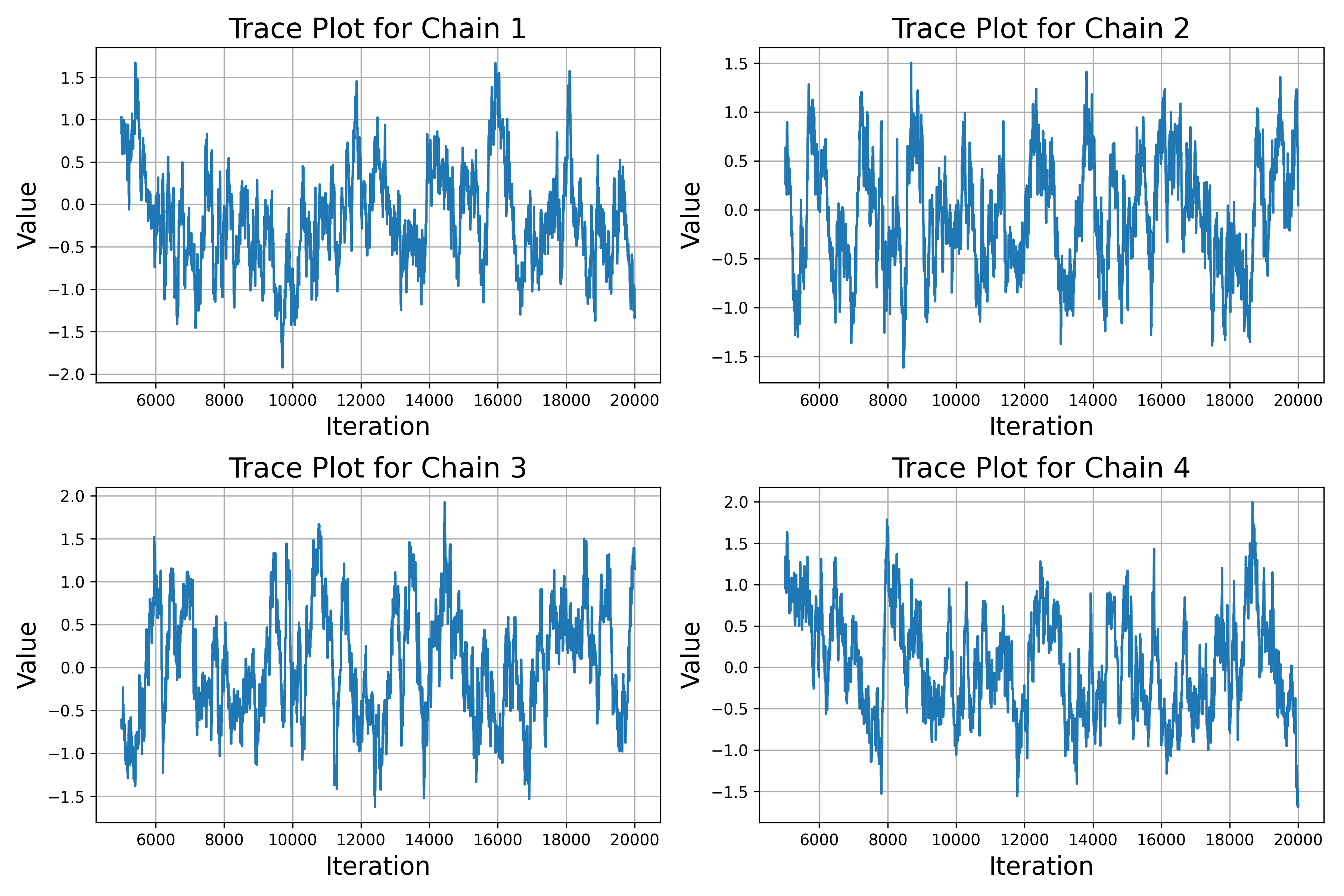}
	\caption{Trace plots for four Markov chains, showing 15,000 iterations after discarding the initial 5,000 as burn-in.}
	\label{fig:trace}
\end{figure*}
Figure \ref{fig:ESJD} reports the Expected Squared Jump Distance (ESJD) as a function of the proposal scale for four Markov chains.  ESJD is a commonly used metric for evaluating the efficiency of a MCMC algorithm. It is defined as the expected value of the squared distance between consecutive states of the chain:
\[
\text{ESJD} = \mathbb{E} \| X^n(t+1) - X^n(t) \|^2,
\]
where \(X^n(t)\) and \(X^n(t+1)\) represent the current and proposed states of the chain, respectively. The ESJD measures how far the Markov chain moves between consecutive steps, providing a direct way to assess the exploration of the target distribution. From Figure \ref{fig:ESJD}, we can see that the optimal scale value $\tau^*=1.366$ gives the highest ESJD  across all four Markov chains. Additionally, the corresponding acceptance rates are approximately 0.234 in Figure \ref{fig:accept_prob}, aligning with the theoretical optimal rate for efficient exploration in high-dimensional MCMC algorithms. 

Figure \ref{fig:trace} reports the trace plots for the first dimension of the four Markov chains using the optimal scale value $\tau^*=1.366$. These plots were generated over a total of 20,000 iterations with an initial burn-in period of 5,000 iterations, revealing the convergence behavior of each chain toward the target distribution. The trace plots illustrate the sampled values over time, highlighting the chains' mixing and exploration capabilities. Ideally, the traces exhibit a randomized pattern around the target distribution, indicating effective exploration. 

\begin{figure*}[t!]
	\centering
	\includegraphics[width = 5in]{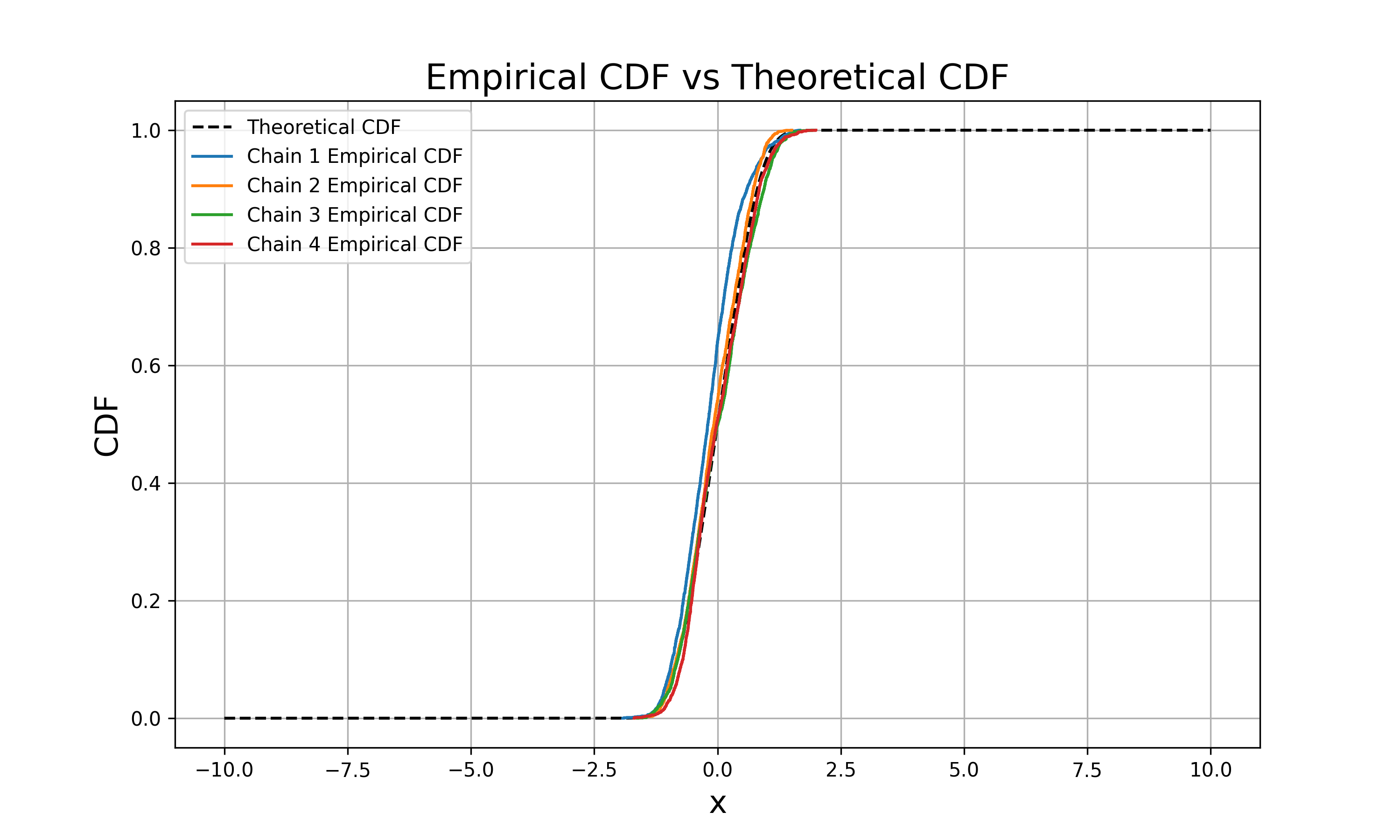}
	\caption{Comparison of the empirical PDFs of four Markov chains with the theoretical PDF of the generalized normal distribution.}
	\label{fig:cdf}
\end{figure*}

At last, in Figure \ref{fig:cdf}, we compare the empirical cumulative distribution functions (CDFs) for each chain using the optimal scale value $\tau^*=1.366$, against the theoretical CDF. 	The CDF of the generalized normal distribution is given by
$$
F(x; \mu, \alpha, \beta) = 
	\frac{1}{2} \left[1 + \frac{\text{sign}(x - \mu)}{ \Gamma\left(1/\beta\right)} \gamma\left(\frac{1}{\beta}, \left(\frac{|x - \mu|}{\alpha}\right)^\beta \right)\right],
$$
where $\gamma(\cdot)$ is the lower incomplete gamma function.
This comparison allows us to evaluate how effectively the chains sample from the cumulative distribution of the target. Notably, the empirical CDFs for the first dimension of each Markov chain closely align with the theoretical CDF.

\section*{Acknowledgments}
This research project was partially supported by NIH grant 1R21AI180492-01 and the Individual Research Grant at Texas A\&M University. 
The author would like to thank five anonymous reviewers and the Editors for their very constructive comments and efforts on this lengthy work, which greatly improved the quality of this paper.

\bibliography{bib-ms}

\end{document}